\newtheorem{theo}{Theorem}[section]
\newtheorem{lemm}[theo]{Lemma}
\newtheorem{cor}[theo]{Corollary}
\numberwithin{equation}{section}
\theoremstyle{definition}
\newtheorem{defi}[theo]{Definition}
\theoremstyle{remark}
\newtheorem{rem}[theo]{Remark}
\newcommand{\Alb}[0]{\operatorname{Alb}}
\newcommand{\Image}[0]{\operatorname{Im}\hspace{-0.03cm}}
\newcommand{\End}[0]{\operatorname{End}}
\newcommand{\rank}[0]{\operatorname{rank}}
\newcommand{\codim}[0]{\operatorname{codim}}
\newcommand{\GL}[0]{\operatorname{GL}}
\newcommand{\Aut}[0]{\operatorname{Aut}}
\newcommand{\Unv}[1]{{#1}_{\rm{univ}}}
\newcommand{\reg}{{\rm{reg}}}
\newcommand{\sing}{{\rm{sing}}}
\newcommand{\sat}{{\rm{sat}}}
\newcommand{\TX}[2]{\mathrm{T}_{#1#2}}
\newcommand*{\da@rightarrow}{\mathchar"0\hexnumber@\symAMSa 4B }
\newcommand*{\da@leftarrow}{\mathchar"0\hexnumber@\symAMSa 4C }
\newcommand*{\xdashrightarrow}[2][]{%
  \mathrel{%
    \mathpalette{\da@xarrow{#1}{#2}{}\da@rightarrow{\,}{}}{}%
  }%
}
\newcommand{\xdashleftarrow}[2][]{%
  \mathrel{%
    \mathpalette{\da@xarrow{#1}{#2}\da@leftarrow{}{}{\,}}{}%
  }%
}
\newcommand*{\da@xarrow}[7]{%
  % #1: below
  % #2: above
  % #3: arrow left
  % #4: arrow right
  % #5: space left 
  % #6: space right
  % #7: math style 
  \sbox0{$\ifx#7\scriptstyle\scriptscriptstyle\else\scriptstyle\fi#5#1#6\m@th$}%
  \sbox2{$\ifx#7\scriptstyle\scriptscriptstyle\else\scriptstyle\fi#5#2#6\m@th$}%
  \sbox4{$#7\dabar@\m@th$}%
  \dimen@=\wd0 %
  \ifdim\wd2 >\dimen@
    \dimen@=\wd2 %   
  \fi
  \count@=2 %
  \def\da@bars{\dabar@\dabar@}%
  \@whiledim\count@\wd4<\dimen@\do{%
    \advance\count@\@ne
    \expandafter\def\expandafter\da@bars\expandafter{%
      \da@bars
      \dabar@ 
    }%
  }%  
  \mathrel{#3}%
  \mathrel{%   
    \mathop{\da@bars}\limits
    \ifx\\#1\\%
    \else
      _{\copy0}%
    \fi
    \ifx\\#2\\%
    \else
      ^{\copy2}%
    \fi
  }%   
  \mathrel{#4}%
}
\begin{document}

\title[Compact K\"ahler manifolds with semi-positive HSC]
{Fundamental groups of \\ compact K\"ahler manifolds with \\ semi-positive holomorphic sectional curvature}

\author{Shin-ichi MATSUMURA}

\address{
Mathematical Institute 
$\&$ Division for the Establishment of Frontier Science of Organization for Advanced Studies, 
Tohoku University, 
6-3, Aramaki Aza-Aoba, Aoba-ku, Sendai 980-8578, Japan.}

\email{{\tt mshinichi-math@tohoku.ac.jp}}
\email{{\tt mshinichi0@gmail.com}}

\date{\today, version 0.01}

\renewcommand{\subjclassname}{%
\textup{2020} Mathematics Subject Classification}
\subjclass[2020]{Primary 53C25, Secondary 32Q10, 14M22.}

\keywords
{Holomorphic sectional curvatures, 
Fundamental groups, 
Varieties of special type, 
Uniformization theorems, 
Maximal rationally connected fibrations, 
Albanese maps, 
Rational connectedness, 
Compact complex tori.}

\maketitle

\begin{abstract}
In this paper, we prove that a compact K\"ahler manifold $X$ with semi-positive holomorphic sectional curvature admits a locally trivial fibration $\phi \colon X \to Y$, 
where the fiber $F$ is a rationally connected projective manifold  
and the base $Y$ is a finite \'etale quotient of a torus. 
This result extends the structure theorem, previously established for projective manifolds, to compact K\"ahler manifolds. 
A key part of the proof involves analyzing the foliation generated by truly flat tangent vectors and showing the abelianness of the topological fundamental group $\pi_{1}(X)$, with a focus on varieties of special type.
\end{abstract}

\tableofcontents

\section{Introduction}\label{Sec-1}

One of the central problems in geometry is decomposing varieties into basic building blocks to understand their geometric properties better. Building on foundational works \cite{Mor79, SY80}, structure theorems were established in the 1980s and 1990s for varieties with \lq\lq semi-positive'' curvature, including compact K\"ahler manifolds with semi-positive holomorphic bisectional curvature \cite{HSW81, MZ86, Mok88} and varieties with nef tangent bundle \cite{CP91, DPS94},  decomposing them into components with \lq\lq positive'' and \lq\lq flat'' curvature. More recently, these theorems have been extended to semi-positive curvature conditions, such as nef anti-canonical bundles \cite{Cao19, CH19, DB22, Zha05}, 
and pseudo-effective tangent bundles \cite{HIM22, IMZ, Mul}, 
semi-positive holomorphic sectional curvatures \cite{Yan18, HW20, Mat22}. 
While significant progress has been made for projective varieties, 
a complete theory for compact K\"ahler manifolds is still lacking.

This paper focuses on the notion of \textit{semi-positive holomorphic sectional curvatures} and completes a comprehensive structure theorem for compact K\"ahler manifolds (see \cite[Section 2]{Mat22} and references therein for the basic properties of holomorphic sectional curvatures). The main result (see Theorem \ref{thm-main} below) resolves \cite[Problem 3.7]{Mat22b} 
and a question discussed in \cite{Ni19}, and extends the structure theorem, previously established 
for projective manifolds, to compact K\"ahler manifolds.

\begin{theo}[{Main Result}]\label{thm-main}
Let $X$ be a compact K\"ahler manifold admitting a K\"ahler metric $g$ with semi-positive holomorphic sectional curvature. 
Then, the following statements hold$:$
\begin{itemize}
\item[$\rm{(a)}$] The $($topological$)$ fundamental group $\pi_{1}(X)$ of $X$ is a virtually abelian group 
$($i.e.,\,there exists an abelian subgroup of  $\pi_{1}(X)$ of finite index$).$
\item[$\rm{(b)}$] 
There exists a fibration $\phi\colon X \to Y$ $($i.e.,\,a proper surjective morphism with connected fibers$)$ 
to a compact K\"ahler manifold $Y$ 
with the following properties$:$
\begin{itemize}
\item[$\rm{(b_{1})}$] The fibration $\phi\colon X \to Y$ is a locally constant fibration 
$($see \cite[Definition 2.3]{MW} for the definition$).$ In particular, 
the fibration $\phi\colon X \to Y$ is a locally trivial fibration.

\item[$\rm{(b_{2})}$] The base $Y$ is a finite \'etale quotient of a torus 
$($i.e.,\,there exists an unramified finite surjective morphism $T \to Y$ from a compact complex torus $T$.$)$

\item[$\rm{(b_{3})}$] The fiber $F$ is a rationally connected projective manifold 
$($i.e.,\,any two points can be connected by a rational curve$)$. 
In particular, the fibration $\phi\colon X \to Y$ is an MRC $($maximal rationally connected$)$ fibration. 
\end{itemize}
\end{itemize}
\end{theo}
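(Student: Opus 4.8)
The plan is to realize $\phi$ as a holomorphic model of the maximal rationally connected (MRC) fibration of $X$ and to extract $\rm{(b_{1})}$--$\rm{(b_{3})}$, and ultimately $\rm{(a)}$, from the interaction between this fibration and the holomorphic foliation $\mathcal{F}$ generated by the truly flat tangent vectors of $g$. Besides the structural properties of $\mathcal{F}$ recalled earlier, I will use: the existence of the MRC fibration of a compact K\"ahler manifold as an almost holomorphic rational map; the fact that a compact K\"ahler manifold with quasi-positive holomorphic sectional curvature is projective and rationally connected; and the Beauville--Bogomolov decomposition for compact K\"ahler manifolds with numerically trivial canonical bundle.

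The first step is to show that $\mathcal{F}$ furnishes a flat horizontal structure transverse to $\phi$. A variational argument on the unit tangent sphere, using $\mathrm{HSC}_g\ge 0$, shows that $H_g(v)=0$ already forces $v$ to be truly flat, i.e.\ $R(v,\bar v,w,\bar w)=0$ for all $w$; hence the truly flat locus is a linear subspace of each tangent space, it is a holomorphic subsheaf $\mathcal{F}\subset T_X$ that is locally free on a Zariski-open set, it is involutive, and its leaves are totally geodesic and flat. Since flat submanifolds are never uniruled, no leaf of $\mathcal{F}$ can lie in a fibre of $\phi$, so on the submersion locus $d\phi$ carries $\mathcal{F}$ isomorphically onto $\phi^{\ast}T_Y$; equivalently $\mathcal{F}$ is a holomorphic flat Ehresmann connection for $\phi$. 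One then argues that all the positivity of $g$ is concentrated transversally, so that a general fibre $F$ carries quasi-positive holomorphic sectional curvature and is therefore rationally connected, and that $\phi$ is an everywhere-defined morphism; in particular $\phi$ is the MRC fibration, which is $\rm{(b_{3})}$. The flat connection, together with the totally geodesic flat leaves, rules out degeneration or jumping of the fibres, so the monodromy acts on $F$ by biholomorphisms and $\phi$ is a locally constant fibration in the sense of \cite[Definition 2.3]{MW}, which is $\rm{(b_{1})}$; moreover $\phi^{\ast}K_Y=\det\mathcal{F}^{\ast}$ is flat, so $K_Y$ is numerically trivial.

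It remains to prove $\rm{(b_{2})}$, after which $\rm{(a)}$ follows: a rationally connected fibre is simply connected, so the homotopy sequence of the bundle $\phi$ gives $\pi_1(X)\cong\pi_1(Y)$. After a finite \'etale cover $\widehat X\to X$ (inducing $\widehat Y\to Y$) chosen so that the Beauville--Bogomolov decomposition reads $\widehat Y\cong T\times V\times W$ with $V$ simply connected Calabi--Yau, $W$ hyperk\"ahler and $\pi_1(\widehat Y)\cong\mathbb{Z}^{2\dim T}$, I take a closed leaf $L$ of the pulled-back foliation: it is a finite cover of $\widehat Y$ and totally geodesic in $\widehat X$, hence carries a K\"ahler metric with $\mathrm{HSC}\ge 0$; but a finite cover of $T\times V\times W$ has numerically trivial canonical bundle, so $\mathrm{HSC}\ge 0$ forces the scalar curvature — and then, by polarization, the whole curvature tensor of $L$ — to vanish identically; thus $L$ is flat, hence a complex torus, and a dimension/irregularity count forces $V=W=\{\mathrm{pt}\}$, so that $\widehat Y=T$ and $Y$ is a finite \'etale quotient of a torus. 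The r\^ole of varieties of special type is precisely in setting up the finite cover $\widehat X$ and in the independent proof that $\pi_1(X)$ is virtually abelian: one shows that $X$ is of special type and that a finite \'etale cover of a special compact K\"ahler manifold with $\mathrm{HSC}_g\ge 0$ has abelian fundamental group, by analyzing its Albanese map in the presence of $\mathcal{F}$.

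I expect the main obstacle to be twofold. First, in the K\"ahler setting one cannot invoke BDPP, the algebraicity of the MRC quotient, or quotient-by-positive-foliation constructions, so the analysis of $\mathcal{F}$ combined with the recent K\"ahler MRC theory must be pushed hard to show that $\phi$ is an honest morphism, that $\mathcal{F}$ extends as a regular foliation with closed leaves, and that $\phi$ is locally constant rather than merely locally trivial. Second, passing from ``$K_Y$ numerically trivial'' to ``$Y$ is a torus quotient'' — i.e.\ killing the Calabi--Yau and hyperk\"ahler factors — cannot be done by projective algebraic-group arguments; this is exactly where the fundamental-group input for varieties of special type, and the totally geodesic flat leaves of $\mathcal{F}$, do the decisive work, and organizing that interplay cleanly is the heart of the matter.
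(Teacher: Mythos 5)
Your plan rests on several steps that do not hold up, and together they bypass rather than solve the actual difficulty. First, the assertion that $\mathrm{HSC}_g\geq 0$ and $H_g(v)=0$ force $v$ to be truly flat is false: the second-variation argument at a zero of the holomorphic sectional curvature only yields inequalities of the shape $R(v,\bar v,w,\bar w)\geq |R(v,\bar w,v,\bar w)|$, not the vanishing $R(v,\bar v,w,\bar w)=0$, and a fortiori not true flatness. Consequently there is no reason that the truly flat vectors form a holomorphic, involutive subsheaf of $T_X$ of rank $\dim Y$, let alone one transverse to the MRC fibration. The splitting $T_X\cong V\oplus W$ with $V$ truly flat and flat is only available (Theorem \ref{thm-ZZ}, i.e.\ \cite[Theorem 1.4]{ZZ}) under the additional hypothesis that $\Omega_X^m$ contains a pseudo-effective invertible subsheaf, and producing such a subsheaf adapted to the MRC base is exactly the step that, in the projective case, came from $K_Y$ being pseudo-effective via \cite{GHS03} and \cite{BDPP13}; the absence of BDPP in the K\"ahler category is the primary difficulty, and your proposal presupposes (flat Ehresmann connection of the right rank, closed leaves finitely covering $\widehat Y$, local constancy, $\det\mathcal{F}^*$ flat) precisely the structure whose construction requires it. Second, the claim that a general MRC fibre carries \emph{quasi-positive} holomorphic sectional curvature is unjustified and is not how rational connectedness of $F$ can be obtained: even in the final statement the fibre metric is only semi-positively curved (Remark \ref{rem-property}), so "all the positivity is concentrated transversally" begs the question. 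Third, your appeal to "special $\Rightarrow$ abelian $\pi_1$ after a finite cover" is Campana's open conjecture \cite[Conjecture 7.1]{Cam04}; only the virtual abelianness of images of \emph{linear} representations of $\pi_1$ is available (\cite[Theorem 7.8]{Cam04}), and the existence of a closed leaf of the foliation finitely covering $\widehat Y$ is likewise an unsupported existence claim.

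For comparison, the paper avoids the MRC fibration altogether: it takes the Albanese map $\alpha\colon X\to Y=\Alb(X)$ (whose base has trivial, hence pseudo-effective, canonical bundle), so \cite[Theorem 1.6]{Mat22} applies directly and gives properties $(\mathrm{b_1})$, $(\mathrm{b_2})$ and the semi-positively curved fibre metric (Corollary \ref{cor-special}). The remaining point $(\mathrm{b_3})$ is reduced to two statements: the fibre has vanishing augmented irregularity, proved via the relative Albanese map $X\to\Alb(X/Y)$ and the positivity of direct images (Bergman-kernel type results), which forces $\dim\Alb(X/Y)=\dim Y$; and Theorem \ref{thm-key}, which shows that $\hat q=0$ implies rationally connected and projective, using Theorem \ref{thm-special} (specialness), the representation-theoretic input \cite[Theorem 7.8]{Cam04}, the splitting of \cite{ZZ}, and the finiteness of the abelianization when $q=0$ to conclude that the flat factor is \'etale trivializable, a contradiction. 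If you want to salvage your approach, you would need an independent proof that $K$ of the K\"ahler MRC base is pseudo-effective (now available via \cite{Ou}, but not by the variational argument you sketch) before any of the foliation-theoretic conclusions can be drawn.
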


\begin{rem}\label{rem-property}
Under the same assumption as in Theorem \ref{thm-main}, 
the compact K\"ahler manifold $(X, g)$ also satisfies the following property:
\begin{itemize}
\item[$\rm{(b_{4})}$] There exist a K\"ahler metric $g_F$ on the fiber $F$ and 
a K\"ahler metric $g_Y$ on the base $Y$ with the following properties$:$
\begin{itemize}
\item[$-$] The holomorphic sectional curvature of $g_F$ is semi-positive.  
\item[$-$] The curvature tensor of $g_Y$ is flat. 
\item[$-$] 
Consider the splitting of the universal cover $X_{\rm{univ}}$ of $X$ obtained from $\phi$ being a locally constant fibration$:$
$$
X_{\rm{univ}} \cong \mathbb{C}^m \times F. 
$$
Then, this splitting is not only biholomorphic but also isometric 
with respect to the K\"ahler metrics $\mu^*g $, $\pi^*g_{Y}$, and $g_F$, 
where $\pi$ and $\mu$ respectively denote the universal cover $\pi \colon \mathbb{C}^m \to Y$ of $Y$ 
and the universal cover $\mu \colon  X_{\rm univ} \to X$ of $X$. 
\end{itemize}
\end{itemize}
Note that property \( (\mathrm{b_4}) \) is stated separately from Theorem \ref{thm-main} for convenience of citation in this paper, not due to any mathematical necessity.
\end{rem}

We describe the contribution of this paper to the problem called \textit{Yau's conjecture} \cite[Problem 72]{Yau82}, one of the main motivations for studying holomorphic sectional curvatures. Yau's conjecture predicts that any compact K\"ahler manifold with positive holomorphic sectional curvature is rationally connected (equivalently, the base $Y$ in Theorem \ref{thm-main} is a single point). This conjecture was fully resolved in \cite{Yan18}, following its resolution for projective manifolds in \cite{HW20}, including the quasi-positive case. The conjecture was further extended to projective manifolds with semi-positive holomorphic sectional curvature (see \cite[Corollary 1.3]{Mat22}) using the notion of truly flat tangent vectors (see \cite{HLWZ18} for the definition). As an application of Theorem \ref{thm-main}, we extend \cite[Corollary 1.3]{Mat22} to compact K\"ahler manifolds (see Corollary \ref{cor-Yau} below). The invariant $n_{{\rm{tf}}}{(X, g)}$ in the corollary can be viewed as an analog of the numerical Kodaira dimension (see \cite{HLWZ18} for the definition of the invariant $n_{{\rm{tf}}}{(X, g)}$).
Therefore, the formulation of Corollary \ref{cor-Yau} can also be seen as an analog of Hacon-M\textsuperscript{c}Kernan's question \cite[Question 3.1]{HM07}, which was affirmatively resolved in \cite{EG19, CCM21, EIM20}. This question remains open for compact K\"ahler manifolds, making Corollary \ref{cor-Yau} particularly interesting. Note that the latter part of Corollary \ref{cor-Yau} was already established in \cite{ZZ} using a different method.

\begin{cor}\label{cor-Yau}
Let $(X, g)$ be a compact K\"ahler manifold  
equipped with a K\"ahler metric $g$ with semi-positive holomorphic sectional curvature. 
Let  $\phi \colon  X \dashrightarrow Y$ be an MRC fibration of $X$. 
Then, we have  
$$
\dim X -\dim Y \geq n_{{\rm{tf}}}{(X, g)}. 
$$
In particular, the manifold $X$ is rationally connected 
if $n_{{\rm{tf}}}{(X, g)}=\dim X$ $($which is satisfied when the holomorphic sectional curvature is quasi-positive$)$. 
\end{cor}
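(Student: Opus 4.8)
The plan is to read the inequality off Theorem~\ref{thm-main} together with the isometric splitting of Remark~\ref{rem-property}~$(\mathrm{b}_{4})$. Recall that $n_{\rm{tf}}(X,g)$ equals $\dim X$ minus the generic rank of the distribution $\mathcal{F} \subseteq T_{X}$ generated by the truly flat tangent vectors of $g$, and recall that the MRC fibration of $X$ is unique up to birational equivalence, so that $\dim X - \dim Y$ does not depend on the chosen MRC fibration $\phi \colon X \dashrightarrow Y$. I may therefore replace $\phi$ by the locally constant fibration $\phi \colon X \to Y$ supplied by Theorem~\ref{thm-main}, whose general fibre $F$ is rationally connected and satisfies $\dim X - \dim Y = \dim F$. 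The statement thus reduces to the single inequality $n_{\rm{tf}}(X,g) \le \dim F$, i.e.\ to the bound $\rank \mathcal{F} \ge \dim Y$.

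To obtain this bound I would pass to the universal cover. True flatness of a tangent vector is a pointwise condition on the curvature tensor of $g$, hence is preserved by the local isometry $\mu \colon X_{\rm{univ}} \to X$; therefore the pullback $\mu^{*}\mathcal{F}$ is precisely the distribution generated by the truly flat tangent vectors of $(X_{\rm{univ}}, \mu^{*}g)$, and in particular $\rank \mu^{*}\mathcal{F} = \rank \mathcal{F}$. By Remark~\ref{rem-property}~$(\mathrm{b}_{4})$ the biholomorphism $X_{\rm{univ}} \cong \mathbb{C}^{m} \times F$ carries $\mu^{*}g$ to the product metric $\pi^{*}g_{Y} \oplus g_{F}$, where $\pi^{*}g_{Y}$ is flat on $\mathbb{C}^{m}$ and $m = \dim Y$. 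For such a product the curvature tensor equals the pullback, under the projection to $F$, of the curvature tensor of $g_{F}$; in particular it vanishes whenever one of its entries is tangent to the flat factor, so that a vector $v = v_{1} + v_{2}$ (with $v_{1}$ tangent to $\mathbb{C}^{m}$ and $v_{2}$ tangent to $F$) is truly flat for $\mu^{*}g$ if and only if $v_{2}$ is truly flat for $g_{F}$. Hence $\mu^{*}\mathcal{F}$ is the direct sum of the tangent bundle of the $\mathbb{C}^{m}$-factor and the pullback of the distribution $\mathcal{F}_{F}$ generated by the truly flat tangent vectors of $(F, g_{F})$, so that $\rank \mathcal{F} = m + \rank \mathcal{F}_{F} \ge m = \dim Y$, and in fact
\[
n_{\rm{tf}}(X,g) = \dim X - \rank \mathcal{F} = \dim X - m - \rank \mathcal{F}_{F} = \dim F - \rank \mathcal{F}_{F} = n_{\rm{tf}}(F, g_{F}).
\]

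Since $n_{\rm{tf}}(F, g_{F}) \le \dim F$ trivially, this proves the desired inequality $n_{\rm{tf}}(X,g) \le \dim X - \dim Y$. For the last assertion: if $n_{\rm{tf}}(X,g) = \dim X$, the inequality forces $\dim Y = 0$, so $Y$ is a point and $X = F$ is rationally connected by Theorem~\ref{thm-main}~$(\mathrm{b}_{3})$. The parenthetical claim reduces to the known fact that quasi-positivity of the holomorphic sectional curvature makes $\mathcal{F}$ vanish --- a truly flat vector $v$ satisfies $R(v, \bar{v}, v, \bar{v}) = 0$, which is incompatible with strict positivity of the holomorphic sectional curvature --- so that $n_{\rm{tf}}(X,g) = \dim X$; I would cite \cite{HLWZ18} here.

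The one point that I expect to require genuine care is the bookkeeping behind the displayed identity $n_{\rm{tf}}(X,g) = n_{\rm{tf}}(F, g_{F})$: one has to verify, by unwinding the definition in \cite{HLWZ18}, that $n_{\rm{tf}}$ really is the corank of the distribution generated by truly flat tangent vectors, that this corank is unchanged under the covering $\mu$, and that this distribution, for a Riemannian product with a flat factor, splits off that flat factor exactly. Granting these formal compatibilities, the corollary follows at once from Theorem~\ref{thm-main} and Remark~\ref{rem-property}~$(\mathrm{b}_{4})$, with no further geometric input needed.
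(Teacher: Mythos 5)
Your argument is correct, and it ultimately rests on the same pointwise fact as the paper's proof --- at every point of $X$ the subspace of truly flat tangent vectors has dimension at least $\dim Y$, so the corank defining $n_{\rm tf}(X,g)$ is at most $\dim X-\dim Y$ --- but you reach that fact by a different route. The paper stays on $X$: after replacing the given (merely meromorphic) MRC fibration by the locally constant one $\phi\colon X\to Y$ furnished by Theorem \ref{thm-main}, it simply quotes the proof of Corollary \ref{cor-special} (i.e.\ \cite[Theorem 1.6]{Mat22}), which already gives the orthogonal splitting \eqref{eq-split} together with the statement that every vector of $\phi^{*}T_{Y}\subset T_X$ is truly flat, and the inequality follows at once. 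You instead pass to the universal cover and use the isometric splitting of Remark \ref{rem-property} $(\mathrm{b_4})$ plus the curvature formula for a product metric with a flat factor to see that the $\mathbb{C}^{m}$-directions are truly flat; this is valid, since true flatness is a pointwise curvature condition preserved by the local isometry $\mu$, and surjectivity of $\mu$ lets the bound descend to every point of $X$, which suffices under any reasonable reading of the definition in \cite{HLWZ18} (minimum or generic value of the truly flat dimension). The trade-off: your route yields the extra identity $n_{\rm tf}(X,g)=n_{\rm tf}(F,g_{F})$ as a by-product, but it leans on $(\mathrm{b_4})$, which the paper asserts without a detailed proof and whose content is itself extracted from the same splitting \eqref{eq-split} and true flatness of $\phi^{*}T_{Y}$; the paper's two-line proof uses that input directly and is therefore more economical. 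Your handling of the final assertion (quasi-positivity kills all truly flat vectors at some point, forcing $n_{\rm tf}(X,g)=\dim X$ and hence $\dim Y=0$, so $X=F$ is rationally connected) agrees with the intended reading.
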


We discuss the challenges in proving Theorem \ref{thm-main} compared to previous studies.
Conclusion (a) directly follows from Conclusion (b), and 
Conclusion (b) was proved in the case where $X$ is a projective manifold (see \cite[Theorem 1.3]{Mat22}).
In the proof of \cite[Theorem 1.3]{Mat22}, it is shown that an MRC fibration $X \dashrightarrow Y$ 
can be chosen to satisfy the desired properties in Theorem \ref{thm-main}.
The smooth base $Y$ of an MRC fibration is non-uniruled by \cite{GHS03},  
and thus the canonical bundle $K_{Y}$ is pseudo-effective by \cite{BDPP13} 
when $X$ (and consequently $Y$) is projective.
The pseudo-effectivity of $K_{Y}$ is a key point in the proof, 
but this property remains an open problem when $X$ is a compact K\"ahler manifold, 
which presents a primary difficulty in proving Conclusion (b).
%Note that the same difficulty arises when considering a structural theorem 
%for compact K\"ahler manifolds with nef anti-canonical bundles (see \cite{MWb}).

Conclusion (b) might be deduced from Conclusion (a). 
However, this approach is not straightforward, and proving Conclusion (a) directly is also challenging. 
Similar results have been established for compact K\"ahler manifolds with nef anti-canonical bundles (see \cite[Th\'eor\`eme 2]{Pau97}, \cite[Theorem 1.4]{Pau17}, \cite[Theorem 2.2]{Cam95}), but adapting these methods to the context of holomorphic sectional curvatures is highly non-trivial. Even for projective manifolds, the proof of Conclusion (a) relies on Conclusion (b).

We outline the proof of Theorem \ref{thm-main}, keeping the aforementioned challenges in mind. Let $(X, g)$ be a compact K\"ahler manifold equipped with a K\"ahler metric $g$ with semi-positive holomorphic sectional curvature. The basic strategy is to consider the Albanese map $\alpha \colon X \to Y := \Alb(X)$ instead of MRC fibrations. As mentioned earlier, since $K_Y$ is a trivial line bundle (and thus pseudo-effective), the Albanese map satisfies all properties in Conclusion (b) except for property $\rm{(b_{3})}$. Therefore, our goal is to prove that the fiber $F$ of the Albanese map $\alpha \colon X \to Y$ is a rationally connected projective manifold.

If $X$ is rationally connected, the augmented irregularity $\hat{q}(X)$ is clearly zero. Conversely, if Theorem \ref{thm-main} holds, we can deduce that when the augmented irregularity $\hat{q}(X)$ vanishes, the manifold $X$ is a rationally connected projective manifold. Based on this observation, we divide the proof into two steps:
\begin{itemize}
    \item[$(1)$] Show that the fiber $F$ has vanishing augmented irregularity: $\hat{q}(F) = 0$.
    \item[$(2)$] Show that $X$ with $\hat{q}(X) = 0$ is a rationally connected projective manifold.
\end{itemize}
The desired property $\rm{(b_{3})}$ can be obtained 
by applying the result of Step (2) to the fiber $F$ that satisfies 
the conclusion $\hat{q}(F) = 0$ of Step (1).

Step (1) will be discussed in Section \ref{Sec-3}. To achieve Step (1), we assume that $X$ and $F$ attain their augmented irregularities (i.e., $q(X) = \hat{q}(X)$ and $q(F) = \hat{q}(F)$) by replacing $X$ with an appropriate finite \'etale cover. We then consider the relative Albanese map $\beta \colon X \to Z := \Alb(X/Y)$ with the diagram:
\begin{equation*}
\xymatrix{
X \ar[rr]^{\beta} \ar[rd]_{\alpha}&&  Z = \Alb(X/Y)\ar[ld]^{\gamma} \\
&Y = \Alb(X).   &
}
\end{equation*}
The fiber $G$ of $\gamma \colon Z \to Y$ is 
isomorphic to the Albanese torus $\Alb(F)$ of $F$, 
and thus the canonical bundle $K_G$ of $G$ is trivial. 
Furthermore, since the canonical bundle $K_Y$ of the base $Y$ 
is pseudo-effective, the theory of variation of Bergman kernels shows that $K_Z$ is pseudo-effective. Therefore, we conclude that $Z$ is a finite \'etale quotient of a torus. If $\dim Z > \dim Y$, this contradicts the definition of the Albanese map of $X$. This shows that $\dim Z = \dim Y$, implying that the irregularity $q(F)$ (and thus the augmented irregularity $\hat{q}(F)$) is zero.

Step (2) will be discussed in Section \ref{Sec-3}. To achieve Step (2), 
we focus on the notion of varieties of special type introduced by Campana. 
The structure theorem for projective manifolds shows that $X$ is rationally connected if it is projective. If $X$ is not projective, there exists a non-trivial holomorphic $2$-form $\eta$ on $X$. Then, by \cite[Theorem 1.4]{ZZ}, the tangent bundle $\TX{X}{}$ splits into $V \oplus W$ such that the tangent vectors in $V$ are truly flat. 
This result is crucial for our argument.
This shows that $V$ is a flat vector bundle, and thus, we can take 
the $\GL$-representation $\rho \colon \pi_1(X) \to \GL(r \mathord{:} \,\mathbb{C})$ associated with the flat bundle $V$. We show that $\Image \rho$ is finite, equivalently, 
the flat vector bundle $V$ is \'etale trivializable (i.e., there exists a finite \'etale cover $\nu \colon X' \to X$ such that $\nu^* V$ is trivial). This implies that some finite \'etale cover of $X$ admits a holomorphic $1$-form, contradicting our assumption $\hat{q}(X) = 0$. 
While it is generally difficult to directly prove that $\pi_1(X)$ is finite, it suffices for our purposes to show that $\Image \rho$ is finite. With this in mind, we prove that $\Image \rho$ is virtually abelian by showing that $X$ is a variety of special type (see Theorem \ref{thm-special}). Since the abelianization of $\pi_1(X)$ is finite due to $\hat{q}(X) = 0$, the universality of the abelianization guarantees that $\Image \rho$ is finite.

%\subsection*{Remarks}\label{subsec-remark}
\medskip
We conclude with remarks on recent developments in holomorphic sectional curvatures. 
The approach used in this paper is somewhat algebraic-geometric. 
It would be of interest to explore a more differential-geometric approach, such as using holonomy groups or the de Rham decomposition, as suggested in \cite[Theorem 1.4, Remark 1.6]{ZZ}. 
Although the structure theorem presented here is powerful, several open problems remain in the study of holomorphic sectional curvatures. 
For instance, determining when a manifold admits (semi-)positive holomorphic sectional curvature remains challenging, even for rationally connected manifolds. For example, while the Hirzebruch surfaces are known to admit positive holomorphic sectional curvature \cite{Hit75} (see \cite{AHZ18, HC20}), it is not known whether its blow-up does (see \cite[Problem 3.8]{Mat20}). 
Holomorphic sectional curvatures continue to evolve in connection with other notions of curvature and positivity, though these aspects were not the focus of this paper. 
For further details, see \cite{Tan, CCZ, Ni19, Ni21, NZ22, Yan18b} and references therein.

Note that after we finished writing this paper, the result of \cite{BDPP13} was generalized to K\"ahler manifolds by \cite{Ou}. 
However, we believe that our methods, which focus on the notion of special varieties, provide a different perspective and are worth presenting.

\subsection*{Notation and Conventions}\label{subsec-notation}

We use the terms ``invertible sheaves'' and ``line bundles'' interchangeably and adopt the additive notation for tensor products (e.g.,\,$L+M:=L\otimes M$ for line bundles $L$ and $M$). Additionally, we use the terms ``locally free sheaves'' and ``vector bundles'' interchangeably. The term ``fibrations'' refers to a proper surjective morphism with connected fibers and the term ``finite \'etale covers'' refers to an unramified finite surjective morphism.

\subsection*{Acknowledgments}\label{subsec-ack}
The author expresses his sincere gratitude to Professor Xi Zhang for sharing the excellent preprint \cite{ZZ}, 
and to Dr.~Shiyu Zhang for giving a talk on \cite{ZZ} at the 30th International Conference on Finite or Infinite Dimensional Complex Analysis and Applications, which inspired the author to revisit the topic of this paper. 
The author was partially supported by Grant-in-Aid for Scientific Research (B) $\sharp$21H00976 from JSPS, 
Fostering Joint International Research (A) $\sharp$19KK0342 from JSPS, 
and JST FOREST Program, $\sharp$PMJFR2368 from JST.

\section{Varieties of special type}\label{Sec-2}

This section is devoted to the proof of Theorem \ref{thm-key}. To this end, we first show that any compact K\"ahler manifolds with semi-positive holomorphic sectional curvature belong to the class of \textit{varieties of special type}, as introduced by Campana. This result is proved by refining the argument in \cite{Mat20, Mat22}, and we provide the relevant details here for the reader's convenience.

\begin{theo}\label{thm-special}
Let $X$ be a compact K\"ahler manifold admitting a K\"ahler metric $g$ with semi-positive holomorphic sectional curvature. 
Then, the manifold $X$ is of special type in the sense of Campana.
\end{theo}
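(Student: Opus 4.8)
The plan is to prove that $X$ is of special type by verifying that every Bogomolov sheaf has rank zero, i.e., that for every $p \geq 1$ and every rank-one subsheaf $\mathcal{L} \subseteq \Omega_X^p$, the Kodaira–Iitaka dimension $\kappa(X, \mathcal{L})$ is strictly less than $p$. The geometric input is that semi-positivity of the holomorphic sectional curvature forces strong positivity on the images of subsheaves of $\Omega_X^p$; concretely, if $\mathcal{L} \subseteq \Omega_X^p$ with $\kappa(X, \mathcal{L}) = p$, one can use the maps defined by $|m\mathcal{L}|$ together with the curvature estimates from \cite{Mat20, Mat22} to produce a holomorphic $p$-form-type section whose ``degeneracy locus'' behaves like that of a big line bundle along a $p$-dimensional direction, and then the semi-positivity of the HSC (via the standard computation bounding $\Delta \log \|s\|^2$ and the associated current/subharmonicity argument) shows that the curvature of $\mathcal{L}$ is $\leq 0$ on the relevant directions, contradicting bigness in those directions.

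Concretely, I would proceed as follows. First, recall Campana's characterization: $X$ is special if and only if it admits no Bogomolov sheaf, equivalently if and only if for the (orbifold) base of no fibration $f \colon X \dashrightarrow Z$ is of general type with $\dim Z \geq 1$; for the purposes of this proof it is cleanest to use the rank-one-subsheaf formulation of Campana. Second, suppose for contradiction that $\mathcal{L} \subseteq \Omega_X^p$ satisfies $\kappa(X, \mathcal{L}) = p$. Resolving and passing to a suitable model, one gets a fibration $f \colon X' \to Z$ with $\dim Z = p$ and $\mathcal{L}$ (pulled back) agreeing generically with $f^* (K_Z \otimes \text{(orbifold correction)})$, and the correction term being effective; thus a high tensor power of $\mathcal{L}$ has many sections and the associated sections of $\Omega_X^p$, restricted to a general fiber of the MRC or Albanese structure, vanish. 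Third — and this is the heart — I would invoke the curvature argument: for a local holomorphic section $s$ of $m\mathcal{L}$ viewed inside $\Omega_X^{\otimes mp}$ (or $\text{Sym}$ thereof), semi-positivity of the holomorphic sectional curvature implies, via the Gauss–Codazzi/second-fundamental-form inequality for the subbundle $\mathcal{L} \subseteq \Omega_X^p$ on the locus where it is a subbundle, that the curvature of the metric induced on $\mathcal{L}^\vee$ (tangent-type direction) is semi-negative in the $p$ flat directions carved out by $s$; this is exactly the mechanism used in \cite[Section 2]{Mat22} to control truly flat vectors. Combined with $\kappa(X, \mathcal{L}) = p$ (which gives a $p$-dimensional family of sections), this forces $\mathcal{L}$ restricted to the generic leaf to be both pseudo-effective-of-maximal-dimension and semi-negatively curved, hence flat along those directions, and a semi-stability/degree argument then shows $\kappa(X,\mathcal{L}) \leq p-1$, a contradiction.

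An alternative, and perhaps more robust, route: rather than re-deriving everything, use the already-established machinery. By \cite[Theorem 1.4]{ZZ} (cited in the excerpt) and the results of \cite{Mat22}, the locus of truly flat tangent vectors generates a splitting or a foliation, and the key structural output is that $X$ fibers (rationally) over a base that is, up to finite étale cover, a torus; tori and rationally connected manifolds are both special, and specialness is stable under such fibrations with special fibers and special orbifold base (Campana's additivity/stability of specialness). More precisely, I would: (i) note that rationally connected manifolds are special (they have no $p$-forms at all for $p \geq 1$, so trivially no Bogomolov sheaf); (ii) note that finite étale quotients of tori are special (a torus has $\Omega^p$ trivial, so every rank-one subsheaf of $\Omega^p$ has $\kappa \leq 0 < p$, and specialness descends along finite étale maps by Campana); (iii) invoke Campana's theorem that specialness is preserved under fibrations $X \to Y$ whose general fiber is special and whose orbifold base $(Y, \Delta)$ is special — here the orbifold base will be trivial or étale-torus-like because of the semi-positive curvature. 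The honest difficulty, and the main obstacle, is step (iii): one must control the multiple fibers of the Albanese (or MRC) map to ensure the orbifold base is genuinely special and not merely the underlying torus with an of-general-type orbifold structure. Semi-positive HSC should rule out such orbifold divisors — again via the curvature positivity forcing the relevant $p$-forms to be too positive — but making this precise is where the real work lies, and it is presumably exactly what the paper does next by "refining the argument in \cite{Mat20, Mat22}."
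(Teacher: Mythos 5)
Your first route is aimed at the right target and is, in spirit, the paper's strategy (contradict $\kappa(\mathcal{L})=p$ for a rank-one subsheaf $\mathcal{L}\subset\Omega_X^p$ using the curvature machinery of \cite{Mat20, Mat22}), but the decisive step is only asserted, and where you try to conclude, the deduction is mis-stated. The paper's proof takes an almost holomorphic dominant rational map $\phi\colon X\dashrightarrow Y$ of general type, a resolution $\bar{\phi}\colon\Gamma\to Y$ with $\tau\colon\Gamma\to X$, and observes that the argument of \cite[Theorem 3.1]{Mat22} uses only two properties of the relevant invertible sheaf: that it is a pseudo-effective invertible subsheaf of $\Omega_X^m$, and that it agrees with $\phi^*K_Y$ over the open set $X_1$ where $\phi$ is a smooth fibration. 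Both properties hold for $M:=\big(\tau_*(\mathcal{O}_\Gamma(\bar{\phi}^*K_Y)_{\sat})\big)^{**}$: pseudo-effectivity because $\kappa(\mathcal{O}_\Gamma(\bar{\phi}^*K_Y)_{\sat})=\dim Y\geq 1$, and agreement over $X_1$ because $\mathcal{O}_\Gamma(\bar{\phi}^*K_Y)$ is already a subbundle of $\Omega_\Gamma^m$ there, so saturation changes nothing. The conclusion is that $M$ is a Hermitian flat line bundle, hence $\kappa\leq 0$, flatly contradicting general type. In your sketch, the claim that semi-positive HSC gives ``semi-negative curvature in the $p$ flat directions carved out by $s$,'' followed by a ``semi-stability/degree argument'' yielding $\kappa(X,\mathcal{L})\leq p-1$, is neither proved nor the right endpoint: the actual interaction between pseudo-effectivity and semi-positive HSC (the integral/maximum-principle argument behind \cite[Theorem 3.1]{Mat22}) forces flatness, i.e.\ $\kappa\leq 0$, and your intermediate claim that sections of $m\mathcal{L}$ vanish on general fibers plays no role and is unjustified. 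The one genuinely new point beyond \cite{Mat22} --- that the argument survives passing to the saturation, i.e.\ that the orbifold/multiple-fiber correction causes no harm --- is exactly what you flag as ``where the real work lies,'' and it is left open in your write-up; it is precisely the refinement the paper supplies.

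Your alternative route does not work in the logical order of this paper: the statement that $X$ fibers over a finite \'etale quotient of a torus with rationally connected fibers is the main theorem, whose K\"ahler proof uses this very specialness result (even the surjectivity and connectedness of fibers of the Albanese map is deduced from specialness via \cite[Theorem 7.4]{Cam04}), so invoking that structure here is circular; moreover \cite[Theorem 1.4]{ZZ} needs as input a pseudo-effective invertible subsheaf of some $\Omega_X^m$, which is not available a priori at this stage. Combined with your own admission that the orbifold-base step (iii) is unresolved, neither route closes the argument as written, although route one could be completed by the saturation observation above.
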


\begin{defi}[{\cite[Definitions 1.19, Proposition 1.25, Definitions 2.1, Theorem 2.22]{Cam04}}]\label{defi-special}
\ \\
(1) A compact K\"ahler manifold $X$ is said to be  \textit{of special type} in the sense of Campana 
if $X$ does not admit any almost holomorphic dominant rational map $\phi \colon X \dashrightarrow Y$ of general type. 

\smallskip
\noindent (2) A dominant rational map $\phi \colon X \dashrightarrow Y$ is said to be \textit{of general type} if 
some (equivalently, any) resolution  $\bar \phi \colon \Gamma \to Y$ of its indeterminacies 
satisfies that  
$$
\kappa \big(\mathcal{O}_{\Gamma}( {\bar \phi}^{*} K_{Y} )_{\sat}  \big) \geq m:=\dim Y >0, 
$$
where 
$$
\mathcal{O}_{\Gamma}( {\bar \phi}^{*} K_{Y} )_{\sat} \subset \Omega_{\Gamma}^{m}
$$
is the saturation of the invertible subsheaf 
defined by the pull-back of the canonical bundle $K_{Y}$ 
in the $m$-th exterior product $\Omega_{\Gamma}^{m}$ of the cotangent bundle $\Omega_{\Gamma}$. 
Here $\kappa(\bullet)$ denotes the Kodaira dimension. 
\end{defi}

\begin{proof}
Let $ \phi \colon X \dashrightarrow Y $ be an almost holomorphic dominant rational map, and let $ \bar{\phi} \colon \Gamma \to Y $ be a resolution of its indeterminacies with a bimeromorphic morphism $ \tau \colon \Gamma \to X $, as shown in the following diagram:
\begin{equation*}
\xymatrix{
\Gamma \ar[rr]^{\tau} \ar[rd]_{\bar{\phi}} && X \ar@{-->}[ld]^{\phi} \\
& Y. &
}
\end{equation*}
Set $ m := \dim Y > 0 $. In \cite[Theorem 3.1]{Mat22}, we showed that if the canonical bundle $ K_Y $ is pseudo-effective, then the invertible sheaf
\[
L := \big( \tau_* (\mathcal{O}_\Gamma(\bar{\phi}^* K_Y)) \big)^{**}
\]
defined by the reflexive hull (the double dual) is a Hermitian flat line bundle (equivalently, $c_{1}(L)=0$)
(see \cite[page 767]{Mat22}). This implies that the Kodaira dimension of $ \mathcal{O}_\Gamma(\bar{\phi}^* K_Y) $ cannot be greater than or equal to one. 

By essentially the same argument, we can show that if the saturation
\[
\mathcal{O}_\Gamma(\bar{\phi}^* K_Y)_{\sat} \subset \Omega_\Gamma^m
\]
of the pull-back $ \bar{\phi}^* K_Y $ 
in  the $ m $-th  exterior product $ \Omega_\Gamma^m $  is pseudo-effective, then the invertible sheaf
\[
M := \big( \tau_* (\mathcal{O}_\Gamma(\bar{\phi}^* K_Y)_{\sat}) \big)^{**}
\]
is also a Hermitian flat line bundle. 
Consequently, the Kodaira dimension of $ \mathcal{O}_\Gamma(\bar{\phi}^* K_Y)_{\sat} $ cannot be greater than or equal to one. 
Indeed, in proving the above conclusion for $L$, we used only the properties that $L$ is a pseudo-effective invertible subsheaf of $ \Omega^m_X $, 
and that $L$ coincides with the pull-back $ \phi^* K_Y $ on $X_1$, where $ X_1 $ is the preimage of a Zariski open set $ Y_1 \subset Y $ such that 
$$ \phi \colon X_1:=\phi^{-1}(Y_{1}) \to Y_1 $$ is a smooth fibration. These properties were used in \cite[Claim 3.5]{Mat22} via \cite[(3.1)]{Mat22}. Although \cite[(3.1)]{Mat22} is formulated on $ X_0 $ (the locus where $ \phi $ is holomorphic), it suffices if it holds on $ X_1 $. The invertible sheaf $M$ satisfies the first property if the saturation $ \mathcal{O}_\Gamma(\bar{\phi}^* K_Y)_{\sat} $ is pseudo-effective. 
Furthermore, we have $M = \phi^* K_Y$ on $X_1$, since
$$
\mathcal{O}_\Gamma(\bar{\phi}^* K_Y) \subset \Omega_\Gamma^m
$$
is a sub-line bundle over the smooth locus $Y_0$ of $\phi \colon X \to Y$ without taking saturation and 
$\tau \colon \Gamma \to X$ is an isomorphism over $Y_{0}$. 
Therefore, the same argument applied to $M$ shows that 
$
M = (\tau_* (\mathcal{O}_\Gamma(\bar{\phi}^* K_Y)_{\sat}))^{**}
$
is a Hermitian flat line bundle, just like $L$.

We will apply the conclusion mentioned above to show that $ X $ is of special type. By the definition of varieties of special type, we can take an almost holomorphic dominant rational map $ \phi \colon X \dashrightarrow Y $ and a bimeromorphic morphism $ \tau \colon \Gamma \to X $ admitting a fibration $ \bar{\phi} \colon \Gamma \to Y $ of general type. According to the definition of fibrations of general type, the Kodaira dimension $ \kappa(\mathcal{O}_{\Gamma}(\bar{\phi}^* K_{Y})_{\sat}) $ is equal to $ \dim Y > 0 $. 
In particular, the line bundle $ M $ is pseudo-effective. 
The conclusion mentioned above shows that $ M $ is a Hermitian flat line bundle. 
This contradicts the fact that $ \kappa(\mathcal{O}_{\Gamma}(\bar{\phi}^* K_{Y})_{\sat}) $ is greater than or equal to one. 
Therefore, the manifold $X$ does not admit any almost holomorphic dominant rational map $\phi \colon X \dashrightarrow Y$ of general type. 
\end{proof}

The fundamental group of a variety of special type is conjectured to be virtually abelian \cite[Conjecture 7.1]{Cam04}. This conjecture remains a challenging problem. Nevertheless, we can establish that the image of the $\GL$-representation of the fundamental group is virtually abelian. This weaker assertion, which is sufficient for our purpose, is proved in \cite{Cam04} with support from the profound results in \cite{Zuo99, Mok92, CCE15}. Furthermore, 
by the following corollary, we confirm that the Albanese map \( \phi \colon X \to Y := \Alb(X) \) satisfies the desired properties listed in Theorem~\ref{thm-main}, except for property $\rm{(b_{3})}$.

\begin{cor}\label{cor-special}
Let $(X, g)$ be a compact K\"ahler manifold admitting a K\"ahler metric $g$ with semi-positive holomorphic sectional curvature. 
Then, the following statements hold$:$
\begin{itemize}
\item[$\rm{(1)}$] Let $\rho \colon \pi_{1}(X) \to \GL(r \mathord{:} \,\mathbb{C})$ be 
a $\GL$-representation of the fundamental group $\pi_{1}(X)$. 
Then, the image $\Image \rho$ is a virtually abelian group. 
\item[$\rm{(2)}$]  The Albanese map $\phi\colon X \to Y:=\Alb(X)$ 
satisfies the properties listed in Theorem \ref{thm-main} except for property $\rm{(b_{3})}$
Furthermore, the fiber $F$ of $\phi\colon X \to Y=\Alb(X)$ admits a K\"ahler metric $g_{F}$ 
with semi-positive holomorphic sectional curvature. 
\end{itemize}
\end{cor}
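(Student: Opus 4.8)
The plan is to deduce both parts from Theorem \ref{thm-special} together with known structural results; the feature that makes the argument run is that the base of the Albanese map has trivial canonical bundle.

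For (1), recall that by Theorem \ref{thm-special} the manifold $X$ is of special type in the sense of Campana. Campana proved that for a special compact K\"ahler manifold the image of \emph{any} finite-dimensional linear representation of $\pi_1$ is virtually abelian: after passing to a finite-index subgroup of $\pi_1(X)$, such a representation factors through a fibration whose monodromy, were its image not virtually abelian, would force that fibration to be of general type, contradicting speciality. This rests on the Shafarevich-type and factorization results of Zuo \cite{Zuo99} and Mok \cite{Mok92} and on the analysis of linear representations of K\"ahler groups in \cite{CCE15}. Applying this to $\rho$ gives (1); one only has to note that Campana's hypotheses hold, which is automatic since $X$ is compact K\"ahler and speciality is a bimeromorphic invariant.

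For (2), write $\alpha \colon X \to Y := \Alb(X)$ for the Albanese map. Property $(\mathrm{b}_2)$ is immediate, since $Y$ is a compact complex torus (take $T = Y$). For $(\mathrm{b}_1)$ and the isometric splitting recorded in Remark \ref{rem-property} I would re-run the proof of \cite[Theorem 1.3]{Mat22}, established there for projective $X$, with the Albanese map playing the role of the MRC fibration. The one ingredient of that proof not available for an arbitrary compact K\"ahler manifold is the pseudo-effectivity of the canonical bundle of the base; but here $K_Y = K_{\Alb(X)}$ is trivial, hence pseudo-effective, so that difficulty disappears. The remaining steps are analytic and go through verbatim: by the theory of truly flat tangent vectors \cite{HLWZ18, ZZ}, semi-positivity of the holomorphic sectional curvature turns the flat (Albanese) directions into a parallel holomorphic distribution, and the resulting de Rham-type splitting exhibits $\alpha$ as a locally constant fibration in the sense of \cite{MW}, with an isometric product decomposition $X_{\mathrm{univ}} \cong \mathbb{C}^m \times F$, a flat metric on the $\mathbb{C}^m$-factor, and a K\"ahler metric $g_F$ on $F$. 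Finally $\mathrm{HSC}(g_F) \ge 0$: in a product metric the holomorphic sectional curvature of a vector tangent to the $F$-factor equals its holomorphic sectional curvature in $(F, g_F)$, and since the universal covering map is a local isometry this in turn equals the holomorphic sectional curvature of the image vector in $X$, which is semi-positive.

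The main obstacle is the second half of (2): whereas (1) may be quoted essentially as a black box from \cite{Cam04}, establishing $(\mathrm{b}_1)$ and the isometric splitting requires a careful check that projectivity enters the proof of \cite[Theorem 1.3]{Mat22} \emph{only} through the pseudo-effectivity of the base canonical bundle — so that, working with the Albanese map, for which that pseudo-effectivity is automatic, the parallelism of the truly flat foliation and the isometric splitting of the universal cover persist in the compact K\"ahler category.
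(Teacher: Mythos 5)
Your part (1) is fine and is exactly the paper's route: speciality of $X$ (Theorem \ref{thm-special}) plus Campana's theorem on linear representations of fundamental groups of special manifolds (\cite[Theorem 7.8]{Cam04}, resting on \cite{Zuo99, Mok92, CCE15}).

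In part (2), however, there is a genuine gap: you never establish that the Albanese map $\alpha \colon X \to \Alb(X)$ is a \emph{fibration}, i.e.\ proper, surjective, with connected fibers. For a general compact K\"ahler manifold the Albanese map need be neither surjective nor have connected fibers, and semi-positive holomorphic sectional curvature does not give this for free (it does not control the Ricci curvature, so no Bochner-type argument makes the holomorphic $1$-forms parallel). Your remark that $(\mathrm{b_2})$ is ``immediate, take $T=Y$'' and your plan to re-run \cite[Theorem 1.3]{Mat22} both tacitly assume this fibration structure: if $\alpha$ were not surjective you would have to work with its image inside the torus, for which the triviality of $K_{\Alb(X)}$ is of no use, and the splitting machinery (\cite[Theorem 1.6]{Mat22}, which is stated for a fibration onto a base with pseudo-effective canonical bundle and already covers the compact K\"ahler case) could not be applied; if the fibers were disconnected, even the statement about ``the fiber $F$'' would not make sense. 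This is precisely the second place where the paper uses Theorem \ref{thm-special}: by Campana's result \cite[Theorem 7.4]{Cam04}, the Albanese map of a special compact K\"ahler manifold is a fibration. Once that is supplied, the rest of your argument — applying the splitting result with $K_Y$ trivial (hence pseudo-effective), obtaining the locally constant structure from the integrable truly flat factor $\phi^{*}T_Y$, and using the Gauss--Codazzi identity on the orthogonal complement to get semi-positivity of $g_F$ — is in substance what the paper does by quoting \cite[Theorem 1.6]{Mat22} directly rather than re-proving \cite[Theorem 1.3]{Mat22}.
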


\begin{proof}
Conclusion (1) is a direct consequence of \cite[Theorem 7.8]{Cam04} and Theorem \ref{thm-special}.

Conclusion (2) has been implicitly proved in \cite[Theorem 1.6]{Mat22}. For the reader's convenience, we will specify the relevant parts of the proof. The Albanese map $ X \to Y := \Alb(X) $ is a fibration by \cite[Theorem 7.4]{Cam04}. Thus, by \cite[Theorem 1.6]{Mat22}, there exists a holomorphic orthogonal splitting of the tangent bundle $T_X$:
\begin{align}\label{eq-split}
T_X \cong T_{X/Y} \oplus \phi^{*}T_Y.
\end{align}
The subbundle $ \phi^{*}T_Y \subset T_X $ is an integrable foliation (see \cite[page 772]{Mat22}), indicating that $ X \to \Alb(X) $ is a locally constant fibration (see, for example, \cite[Lemma 2.5]{Mat22}). Moreover, any tangent vectors in $ \phi^{*}T_Y \subset T_X $ are truly flat (see \cite[page 772]{Mat22}). Thus, by the holomorphic splitting and the Gauss-Codazzi type formula (see \cite[(2.2)]{Mat22}), the induced metric $ g_{T_{X/Y}} $ on $ T_{X/Y} $ satisfies
\begin{align*}
\big \langle \sqrt{-1}\Theta_g(e, \bar{e})(e), e \big \rangle_g &= 
\big \langle \sqrt{-1}\Theta_{g_{T_{X/Y}}}(e, \bar{e})(e), e \big \rangle_{g_{T_{X/Y}}}
\end{align*}
for any tangent vector $ e \in T_{X/Y} \subset T_X $. 
Here $\sqrt{-1}\Theta_h$ denotes the Chern curvature of a Hermitian vector bundle $(E, h)$:
$$
\sqrt{-1}\Theta_h := \sqrt{-1}\Theta_h(E) \in C^{\infty}(X, \Lambda^{1,1} \otimes \End(E)),
$$
and $\langle \bullet, \bullet \rangle_h$ is the Hermitian inner product with respect to $h$. See \cite[Section 2]{Mat22} for the precise notation and definition of truly flat tangent vectors. The left-hand side equals the holomorphic sectional curvature $H_g([f])$ evaluated at $ f $, which is semi-positive by assumption. This shows that the holomorphic sectional curvature of the induced metric $ g_F := g_{T_{X/Y}}|_F $ on $ T_F \cong T_{X/Y}|_F $ is semi-positive.
 \end{proof}

As explained in the proof of Theorem~\ref{thm-special}, the splitting \eqref{eq-split} is derived from the existence of a pseudo-effective invertible subsheaf $L \subset \Omega^{m}_{X}$ with the geometric property that $L = \phi^{*} K_{Y}$ holds on $X_{1}$. In fact, as shown by an excellent result in \cite{ZZ}, a splitting of the tangent bundle $T_X$ can be obtained solely from the existence of a pseudo-effective invertible subsheaf $L \subset \Omega^{m}_{X}$, without requiring the geometric condition. This result extends a generalization of Yau's conjecture to K\"ahler manifolds in the quasi-positive case (see \cite[Theorem 1.5]{ZZ}) and plays a crucial role in this paper.

\begin{theo}[{\cite[Theorem 1.4]{ZZ}}]\label{thm-ZZ}
Let $(X, g)$ be a compact K\"ahler manifold equipped with a K\"ahler metric $g$ with semi-positive holomorphic sectional curvature. 
Suppose that there exists a pseudo-effective invertible subsheaf $\mathcal{F} \subset \Omega_{X}^{m}$. 
Then, the tangent bundle $T_{X}$ admits a holomorphic orthogonal decomposition 
$$T_{X} \cong V \oplus W$$ 
such that $V$ is a nonzero flat vector bundle 
$($i.e.,\,there exists a $\GL$-representation $\rho \colon \pi_{1}(X) \to \GL(r \mathord{:} \,\mathbb{C})$ 
of the fundamental group $\pi_{1}(X)$ that determines $V$$)$.
\end{theo}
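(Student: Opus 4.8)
The plan is to adapt the curvature comparison of \cite[Theorem 3.1]{Mat22} together with the splitting argument of \cite[Theorem 1.6]{Mat22}, whose combined heart is a mechanism turning a pseudo-effective invertible subsheaf of $\Omega_{X}^{m}$ into a truly flat splitting of $T_{X}$. In \cite{Mat22} that mechanism leaned on the geometric hypothesis that the subsheaf agrees with $\phi^{*}K_{Y}$ over the smooth locus of a fibration, which pins the relevant curvature term to the holomorphic sectional curvature; one new input is a Berger-type averaging of $H_{g}$ that makes the estimate go through for an arbitrary $\mathcal F$. Concretely, I would equip $\mathcal F$ with a (possibly singular) Hermitian metric $h_{0}$ with $\sqrt{-1}\Theta_{h_{0}}(\mathcal F)\ge 0$ — available since $\mathcal F$ is pseudo-effective — and compare it with the smooth metric $h_{g}$ that $\mathcal F$ inherits through $\mathcal F\hookrightarrow\Omega_{X}^{m}=\Lambda^{m}T_{X}^{*}$. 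On the Zariski-open locus $U$ where $\mathcal F$ is a subbundle, a local generator $\xi$ determines a canonical kernel distribution $\mathcal W_{0}:=\{\,v\in T_{X}\mid\iota_{v}\xi=0\,\}$; set $V:=\mathcal W_{0}^{\perp_{g}}$, with a $g$-orthonormal frame $v_{1},\dots,v_{r}$ at a point. Using the curvature-decreasing property of subsheaves and the formula for the Chern curvature of $\Lambda^{m}T_{X}^{*}$, I would derive
\[
\sum_{j=1}^{r}\sqrt{-1}\Theta_{h_{g}}(\mathcal F)(v_{j},\bar v_{j})\ \le\ -\sum_{j,k=1}^{r}R_{g}(v_{j},\bar v_{j},v_{k},\bar v_{k}),
\]
where $R_{g}$ is the curvature tensor of $g$.

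The key point is that $H_{g}\ge 0$ already forces the right-hand side to be $\le 0$: averaging $H_{g}$ over the unit sphere of $\mathrm{span}_{\mathbb{C}}(v_{1},\dots,v_{r})$ yields a Berger-type identity expressing $\sum_{j,k}R_{g}(v_{j},\bar v_{j},v_{k},\bar v_{k})$ as a positive multiple of $\int H_{g}(v)\,d\sigma(v)\ge 0$. Thus $\sqrt{-1}\Theta_{h_{g}}(\mathcal F)$ is non-positive along $V$. To turn this against the pseudo-effectivity, I would run a maximum-principle argument for the potential $\psi=\log(h_{g}/h_{0})$ (after the usual regularization of $h_{0}$): at a point where $\psi$ is maximal one gets $\sqrt{-1}\Theta_{h_{g}}(\mathcal F)\ge\sqrt{-1}\Theta_{h_{0}}(\mathcal F)\ge 0$, and combined with the inequality above this forces $\sum_{j,k}R_{g}(v_{j},\bar v_{j},v_{k},\bar v_{k})=0$ there; the equality case of the Berger identity then gives $H_{g}(v)=0$ for every $v\in V$ at that point, so by the characterization of truly flat tangent vectors under $H_{g}\ge 0$ (see \cite{HLWZ18} and \cite[Section 2]{Mat22}) there is a nonzero truly flat tangent vector, and $V\ne 0$ since $\xi\ne 0$.

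Once a nonzero truly flat direction is available, the splitting follows from the theory of the truly flat foliation. The truly flat distribution is then a nonzero holomorphic subbundle and is integrable by \cite{HLWZ18} (cf.\ \cite[Section 2]{Mat22}); extending it over all of $X$ via reflexivity and the boundedness of the $g$-orthogonal projection, and arguing as in \cite[Theorem 1.6]{Mat22} with the Gauss--Codazzi formula \cite[(2.2)]{Mat22} and $H_{g}\ge 0$, the second fundamental form of this subbundle $V\subset T_{X}$ vanishes, giving a holomorphic $g$-orthogonal splitting $T_{X}\cong V\oplus W$ with $W=V^{\perp_{g}}$; the same computation forces the Chern curvature of $(V,g|_{V})$ to vanish, so $V$ is Hermitian flat, hence flat, and thus determined by a representation $\rho\colon\pi_{1}(X)\to\GL(r \mathord{:} \, \mathbb{C})$.

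The hard part will be making the curvature estimate and its consequence effective without the geometric hypothesis of \cite{Mat22}. The Berger averaging supplies the needed positivity, but the $(1,1)$-form against which $\sqrt{-1}\Theta_{h_{g}}(\mathcal F)$ is non-positive — the one attached to the \emph{varying} distribution $V$ — is not $d$-closed, so one cannot conclude by a plain cohomological integration; the argument has to be a genuine maximum principle, with the technical burden of regularizing the singular metric $h_{0}$ and of ensuring the extremal point lies in the locus where $\mathcal F$ is a subbundle. A further subtlety is that a pseudo-effective invertible subsheaf of $\Omega_{X}^{m}$ need not be locally decomposable, which complicates the curvature bookkeeping leading to the displayed inequality; passing first through the Hermitian flatness of $\mathcal F$ — after which, on an \'etale cover, $\xi$ becomes a parallel $m$-form with rigid linear-algebraic structure — is a natural way to tame this.
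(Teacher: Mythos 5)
Your route diverges sharply from the paper's: the paper does \emph{not} reprove the splitting at all, but quotes \cite[Theorem 1.4]{ZZ} for the existence of a holomorphic orthogonal decomposition $T_X\cong V\oplus W$ with $V$ nonzero and consisting of truly flat vectors, and then only adds the short observation that, because the splitting is holomorphic and orthogonal, the Gauss--Codazzi identity gives $\langle \sqrt{-1}\Theta_{g_V}(v,\bar v)f,f\rangle_{g_V}=\langle \sqrt{-1}\Theta_{g}(v,\bar v)f,f\rangle_{g}=0$ for $f\in V$ truly flat, so the induced metric on $V$ is Hermitian flat and $V$ is determined by a representation of $\pi_1(X)$. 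Your final paragraph recovers this last step correctly, but the bulk of your proposal is an attempt to reprove the ZZ input itself, and there the argument has genuine gaps. First, the displayed inequality $\sum_j\sqrt{-1}\Theta_{h_g}(\mathcal F)(v_j,\bar v_j)\le-\sum_{j,k}R_g(v_j,\bar v_j,v_k,\bar v_k)$ is only justified when the local generator $\xi$ is decomposable with kernel of the right corank, which is exactly the geometric situation of \cite{Mat22} (there $\mathcal F=\phi^*K_Y$ over $X_1$, so $\xi$ is a wedge of fiber-conormal directions). For an arbitrary pseudo-effective invertible subsheaf the generator need not be decomposable, $V:=(\ker\xi)^{\perp_g}$ may even be all of $T_X$, and the curvature of the induced metric is a weighted combination of bisectional terms that is not controlled by the partial scalar sum over $V$. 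Your proposed remedy --- ``passing first through the Hermitian flatness of $\mathcal F$, after which $\xi$ becomes parallel'' --- is circular: Hermitian flatness of the line bundle $\mathcal F$ does not make $\xi$ parallel inside $\Omega_X^m$; parallelism is the vanishing of the second fundamental form of $\mathcal F\subset\Omega_X^m$, which is part of what has to be proved.

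Second, even granting the inequality and the equality case at a maximum point of $\psi$, the conclusion ``$H_g\equiv 0$ on $V_x$ at that point, hence there is a nonzero truly flat vector'' does not follow. True flatness means $R_g(v,\cdot,\cdot,\cdot)=0$, which is far stronger than $H_g(v)=0$; under $H_g\ge 0$, pointwise vanishing of $H_g$ on a subspace at a single point only yields first-order consequences such as $R_g(v,\bar v,v,\bar w)=0$, not the full vanishing, and neither \cite{HLWZ18} nor \cite[Section 2]{Mat22} provides such a characterization. Producing truly flat vectors, and then a \emph{global} holomorphic subbundle, is precisely the hard analytic content of \cite{ZZ}: the truly flat distribution has no a priori constant rank or holomorphicity, so the phrase ``the splitting follows from the theory of the truly flat foliation'' hides the main theorem rather than invoking a known lemma. (By contrast, in \cite{Mat22} the positivity is exploited cohomologically, by integrating the closed curvature current of the line bundle against $\omega^{\dim X-1}$, which is why no maximum principle for the non-closed form attached to the varying $V$ is needed there; your single-point maximum principle, after Demailly regularization of $h_0$, also loses an $\epsilon\omega$ term and the extremal point may fall outside the locus where $\mathcal F$ is a subbundle --- difficulties you flag but do not resolve.) As written, the proposal therefore does not constitute a proof; citing \cite[Theorem 1.4]{ZZ}, as the paper does, and then running your concluding Gauss--Codazzi argument for the flatness of $V$ is the complete and intended path.
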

\begin{proof}
By \cite[Theorem 1.4]{ZZ}, there exists a holomorphic orthogonal decomposition 
$T_{X} \cong V \oplus W$ such that $V$ is a nonzero vector bundle and 
any tangent vectors in $V \subset T_{X}$ are truly flat. 
In the same way as in the proof of Corollary \ref{cor-special}, 
due to the holomorphic splitting, the induced metric $ g_{V} $ on $ V $ satisfies
\begin{align*}
\big \langle \sqrt{-1}\Theta_{g}(v, \bar{v})(f), f \big \rangle_g &= 
\big \langle \sqrt{-1}\Theta_{g_{V}}(v, \bar{v})(f), f \big \rangle_{g_{V}}
\end{align*}
for any tangent vector $ f \in V \subset T_{X} $ and any tangent vector $v \in T_{X}$. 
We can see that the left-hand side equals zero since $f \in V \subset T_{X}$ is a truly flat tangent vector. 
Hence, the induced metric on $V$ is Hermitian flat, and thus $V$ is a flat vector bundle.
\end{proof}

Based on the above preparations, we now proceed to prove the main theorem of this section.

\begin{theo}\label{thm-key}
Let $X$ be a compact K\"ahler manifold admitting a K\"ahler metric $g$ with semi-positive holomorphic sectional curvature. 
Consider the augmented irregularity 
$$
\hat{q}(X):=
\sup \{ 
q(X')\,|\, X' \to X \text{ is a finite \'etale cover} 
\} \in \mathbb{Z}_{\geq0} \cup \{\infty\},
$$
where 
$$q(X'):=\dim H^{0}(X', \Omega^{1}_{X'}) $$ is the irregularity of $X'$. 
Then, the following statements hold$:$
\begin{itemize}
\item[$\rm{(1)}$] The inequality $\hat{q}(X) \leq  \dim X$ holds. 
In particular, there exists a finite \'etale cover $X' \to X$ such that $q(X')=\hat{q}(X)$ holds. 
\item[$\rm{(2)}$]  If the augmented irregularity $\hat{q}(X)$ vanishes, 
then $X$ is a rationally connected projective manifold. 
\end{itemize}

\end{theo}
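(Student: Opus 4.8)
\textbf{Part (1).} Let $\nu\colon X'\to X$ be a connected finite \'etale cover. Then $X'$ is again a compact K\"ahler manifold, and $\nu^*g$ has semi-positive holomorphic sectional curvature since $\nu$ is a local isometry; so Corollary~\ref{cor-special}\,(2) applies to $X'$ and the Albanese map $X'\to\Alb(X')$ is a fibration, in particular surjective. Hence $q(X')=\dim\Alb(X')\le\dim X'=\dim X$. Taking the supremum over all such $X'$ gives $\hat q(X)\le\dim X<\infty$, and since $\{q(X')\}$ is a bounded set of non-negative integers the supremum is attained. This proves~(1).

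\textbf{Part (2): reduction to the projective case.} Assume $\hat q(X)=0$; I first claim $X$ is projective. If not, then $h^{2,0}(X)>0$: otherwise $H^2(X,\mathbb C)=H^{1,1}(X)$, so $H^2(X,\mathbb Q)$ is dense in $H^{1,1}(X,\mathbb R)$, the K\"ahler cone contains a rational class, and Kodaira's embedding theorem makes $X$ projective. Choose $0\neq\eta\in H^0(X,\Omega_X^2)$ and let $\mathcal F\subset\Omega_X^2$ be the saturation of the image of $\eta\colon\mathcal O_X\to\Omega_X^2$; on the smooth variety $X$ this $\mathcal F$ is an invertible subsheaf, and it carries the nonzero section $\eta$, hence is effective and in particular pseudo-effective. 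Theorem~\ref{thm-ZZ} then yields a holomorphic orthogonal decomposition $T_X\cong V\oplus W$ with $V\neq0$ a flat vector bundle, corresponding to a representation $\rho\colon\pi_1(X)\to\GL(r,\mathbb C)$ with $r=\rank V\ge1$; by Corollary~\ref{cor-special}\,(1), $\Image\rho$ is virtually abelian.

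\textbf{Part (2): conclusion.} The crucial step is to upgrade this to: $\Image\rho$ is \emph{finite}. Choose an abelian subgroup $A\le\Image\rho$ of finite index and let $p\colon X'\to X$ be the connected finite \'etale cover with $p_*\pi_1(X')=\rho^{-1}(A)$. The restriction $\rho|_{\pi_1(X')}\colon\pi_1(X')\to A$ has abelian image, hence factors through $\pi_1(X')^{\mathrm{ab}}=H_1(X';\mathbb Z)$; but $q(X')=0$ because $X'\to X$ is finite \'etale and $\hat q(X)=0$, so $b_1(X')=0$ and $H_1(X';\mathbb Z)$ is finite. Therefore $A=\rho(\pi_1(X'))$ is finite, hence so is $\Image\rho$. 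Consequently $V$ is \'etale trivializable: on the finite \'etale cover $\mu\colon X''\to X$ with $\mu_*\pi_1(X'')=\ker\rho$ one has $\mu^*V\cong\mathcal O_{X''}^{\oplus r}$. Since $V$ is a direct summand of $T_X$, dualizing and pulling back exhibits $\mathcal O_{X''}^{\oplus r}$ as a direct summand of $\Omega^1_{X''}$, so $q(X'')\ge r\ge1$, contradicting $\hat q(X)=0$. Hence $X$ is projective, and the structure theorem for projective manifolds \cite[Theorem~1.3]{Mat22} gives an MRC fibration $\psi\colon X\to Y$ with $Y$ a finite \'etale quotient of a torus $T$. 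The base change $X\times_Y T\to T$ is again a fibration, so $X\times_Y T$ is a connected finite \'etale cover of $X$ with $q(X\times_Y T)\ge q(T)=\dim T$; since $\hat q(X)=0$ this forces $\dim T=0$, i.e.\ $Y$ is a point, so $X$, being the fiber of $\psi$, is a rationally connected projective manifold.

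\textbf{Expected main obstacle.} I expect the implication ``$\Image\rho$ virtually abelian $\Rightarrow$ $\Image\rho$ finite'' to be the heart of the argument: it is false as a purely group-theoretic statement (the infinite dihedral group is virtually abelian with finite abelianization), and it genuinely needs the full strength of $\hat q(X)=0$ applied to the cover $X'$ on which $\rho$ becomes abelian, together with the fact that $X$ is of special type (Corollary~\ref{cor-special}\,(1)), which rests on Campana's theory. Everything else --- the non-projectivity dichotomy, the use of Theorem~\ref{thm-ZZ}, and the final reduction via the projective structure theorem --- should be routine once these inputs are in hand.
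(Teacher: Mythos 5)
Your proposal is correct and follows essentially the same route as the paper: part (1) via surjectivity of the Albanese map from Corollary \ref{cor-special}, and part (2) by producing a holomorphic $2$-form in the non-projective case, invoking Theorem \ref{thm-ZZ} and Corollary \ref{cor-special}(1), and using $q=0$ plus the finiteness of $H_1$ on a suitable finite \'etale cover to force $\Image\rho$ to be finite, hence a trivializing cover with a nonzero $1$-form, a contradiction. Your minor variations (passing to the cover corresponding to $\rho^{-1}(A)$ instead of replacing $X$ by the cover killing $\Image\rho/A$, and checking $\dim Y=0$ via base change to the torus) are equivalent to the paper's steps.
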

\begin{proof}
For any finite \'etale cover $\nu \colon X' \to X$, the K\"ahler metric $\nu^{*} g$ defined by the pull-back has semi-positive holomorphic sectional curvature. Hence, the Albanese map $\phi \colon X' \to Y' := \Alb(X')$ is a locally constant fibration (in particular, surjective) 
by Corollary \ref{cor-special}. This shows that 
$$
q(X') = \dim Y' \leq \dim X' = \dim X.
$$
This argument proves Conclusion (1).

We now consider Conclusion (2). Once we know that $X$ is a projective manifold, its rational connectedness is easily verified. Indeed, since Theorem \ref{thm-main} has been proved for projective manifolds, there exists an MRC fibration $X \to Y$ with the properties in Theorem \ref{thm-main}. Then, we can easily see that $\dim Y = \hat{q}(X)$. Hence, by the assumption $\hat{q}(X) = 0$, we conclude that $Y$ is a single point. This means that $X$ itself is the fiber of the MRC fibration (i.e.,\,it is rationally connected).

Suppose now that $X$ is not projective. Then, there exists a non-zero holomorphic $2$-form $\eta$ on $X$. Otherwise, the ample cone would coincide with the K\"ahler cone, which is non-empty, implying that $X$ is projective (see, for example, \cite[Proposition 3.3.2 and Corollary 5.3.3]{Huy05}).
We consider the reflexive hull 
$$
\mathcal{F} := (\langle \eta \rangle_{\mathcal{O}_{X}})^{**}
$$ 
of the sheaf $\langle \eta \rangle_{\mathcal{O}_{X}}$ generated by $\eta$. 
This sheaf $\mathcal{F} \subset \Omega_{\Gamma}^{2}$ is an effective invertible subsheaf 
$\mathcal{F} \subset \Omega_{X}^{2}$. Thus, by Theorem \ref{thm-ZZ}, the tangent bundle $T_{X}$ admits a holomorphic orthogonal decomposition $T_{X} \cong V \oplus W$ such that $V$ is a nonzero flat vector bundle. Let $\rho \colon \pi_{1}(X) \to \GL(r\mathord{:}\,\mathbb{C})$ be a $\GL$-representation that determines the flat vector bundle $V$, where $r := \rank V > 0$.

We will show that we may assume that $\Image \rho$ is an abelian group. By Corollary \ref{cor-special}, there exists an abelian normal subgroup $A \subset \Image \rho$ of finite index. 
We consider the \'etale cover $X' \to X$ corresponding to the kernel of the composition 
$$ \pi_{1}(X) \to \Image \rho \to \Image \rho / A.$$ 
The cover $\nu \colon X' \to X$ is a finite \'etale cover since $\Image \rho / A$ is a finite group. Note that $$0 \leq \hat{q}(X') \leq \hat{q}(X) = 0$$ holds by assumption. 
We now consider the induced $\GL$-representation 
$$
\rho' \colon \pi_{1}(X') \to \pi_{1}(X) \xrightarrow{\rho} \GL(r\mathord{:}\,\mathbb{C}).
$$ 
By construction, the image $\Image \rho'$ is contained in  $A$ (in particular, it is an abelian group). Furthermore, we still have the holomorphic orthogonal splitting 
\begin{align}\label{eq-pull}
T_{X'} = \nu^{*}T_{X} \cong \nu^{*}V \oplus \nu^{*}W
\end{align}
and the $\GL$-representation $\rho' \colon \pi_{1}(X') \to \GL(r\mathord{:}\,\mathbb{C})$ determines the flat subbundle 
$$\nu^{*}V \subset \nu^{*}T_{X} = T_{X'}.$$ 
Note that if $X'$ is projective, so is $X$.
Therefore, after replacing $X$ with the finite \'etale cover $X'$, 
we may assume that the image $\Image \rho$ is an abelian group.  

The first homology group $H_{1}(X, \mathbb{Z})$ can be regarded as the abelianization of $\pi_{1}(X)$. Since $\dim H^{1}(X, \mathbb{C}) = 0$ holds by the assumption $q(X) = 0$ and the K\"ahlerness of $X$, the first homology group $H_{1}(X, \mathbb{Z})$ is a torsion group (in particular, it is a finite group). Since $\Image \rho$ is an abelian group, the universality of abelianizations shows that $\rho \colon \pi_{1}(X) \to \GL(r\mathord{:}\,\mathbb{C})$ factors through 
$$\rho \colon \pi_{1}(X) \to H_{1}(X, \mathbb{Z}) \to \GL(r\mathord{:}\,\mathbb{C}).$$ 
This implies that $\Image \rho$ is a finite group, which means that $V$ is \'etale trivializable (i.e., there exists a finite \'etale cover $\nu \colon X' \to X$ such that the pull-back 
$\nu^{*}V $ is a trivial vector bundle on $X'$). 
Thus, by noting the splitting as in \eqref{eq-pull}, 
we can see that the cover $X'$ admits a non-zero $1$-form by 
$$\mathcal{O}_{X'}^{\oplus r} \cong (\nu^{*}V)^{*} \subset \Omega^{1}_{X'}.$$ 
This contradicts the assumption $\hat{q}(X) = 0$.
\end{proof}

\section{Proof of the main result}\label{Sec-3}

This section is devoted to the proof of Theorem \ref{thm-main}. To achieve this, we examine the Albanese map $X \to Y := \Alb(X)$ in detail and prove that the fiber $F$ of $X \to Y = \Alb(X)$ has vanishing augmented irregularity $\hat{q}(F) = 0$ 
to apply Theorem \ref{thm-key}. 
We begin with an elementary lemma, which is well-known to experts, but we provide proof for the reader's convenience.

\begin{lemm}\label{lem-fact}
Let $f \colon X \to Y$ be a smooth fibration between smooth varieties 
and $X \xrightarrow{g} Z \xrightarrow{\sigma} Y$ be its Stein factorization: 
\begin{equation*}
\xymatrix{
X \ar[rr]^{g} \ar[rd]_{f}&&  Z\ar[ld]^{\sigma} \\
&Y.   &
}
\end{equation*}
Then, the variety $Z$ is smooth, the morphism $\sigma \colon Z \to Y$ is a finite \'etale cover, 
and the morphism $g \colon X \to Z$ be a smooth fibration. 
\end{lemm}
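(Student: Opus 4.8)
The statement is essentially a local computation: smoothness, étaleness, and the fibration property are all local on $Y$ (and on $Z$), so I would reduce to understanding $\sigma\colon Z\to Y$ formally or analytically near a point. The key input is that $f\colon X\to Y$ is a smooth fibration, so its fibers $X_y$ are connected smooth varieties; the Stein factorization $Z = \operatorname{Spec}_Y f_*\mathcal{O}_X$ records the connected components of the fibers together with their "thickening" coming from $H^0(X_y,\mathcal{O}_{X_y})$. Since each $X_y$ is a connected smooth \emph{proper} variety (the fibration is proper), $H^0(X_y,\mathcal{O}_{X_y}) = \mathbb{C}$, so the fibers of $\sigma$ are reduced single points, i.e. $\sigma$ is quasi-finite; being proper it is finite, and being finite with reduced one-point fibers over a reduced base it is at least set-theoretically bijective étale-locally.

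First I would record that $g\colon X\to Z$ is surjective with geometrically connected fibers by the very construction of the Stein factorization (this is the standard property: $g_*\mathcal{O}_X = \mathcal{O}_Z$ and the fibers of $g$ are the connected components of the fibers of $f$). Next, to see that $\sigma$ is étale, I would use the smoothness of $f$: by the local structure of smooth morphisms, after an étale base change $Y'\to Y$, $X$ looks like a product, and one can compute $f_*\mathcal{O}_X$ directly; alternatively, invoke flatness of $f$ together with Grauert's base-change / cohomology-and-base-change to see that $f_*\mathcal{O}_X$ is locally free of finite rank (equal to the number of connected components of the fiber, since $h^0(X_y,\mathcal{O}_{X_y})$ is constant), hence $\sigma$ is finite flat; then reducedness of the fibers of $\sigma$ (again from $H^0(X_y,\mathcal{O}_{X_y})=\mathbb{C}$ on each component) upgrades finite flat to finite étale.

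Once $\sigma$ is étale, $Z$ is smooth because étale morphisms preserve smoothness over a smooth base $Y$. Finally, $g\colon X\to Z$ is a smooth fibration: it is proper (being the first factor of a Stein factorization of a proper morphism) with connected fibers, and it is smooth because $f = \sigma\circ g$ is smooth and $\sigma$ is étale — smoothness of a composite with the second map étale forces smoothness of the first, by the standard exact sequence of differentials $g^*\Omega_{Z/Y}\to \Omega_{X/Y}\to \Omega_{X/Z}\to 0$ together with the Jacobian criterion, or simply because étale maps are "invertible" for the purpose of checking smoothness.

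The main obstacle, such as it is, is being careful about whether one needs properness of $f$ (to force $H^0$ of the fibers to be $\mathbb{C}$ and to invoke the proper base change / finiteness of Stein factorization) — the lemma as stated says "fibration," which in this paper's conventions (see the Notation section) means a proper surjective morphism with connected fibers, so "smooth fibration" should be read as "smooth proper morphism with connected fibers," and with that reading everything goes through cleanly. If one only had a smooth morphism without properness, the Stein factorization statement would need adjustment, but that is not the situation here.
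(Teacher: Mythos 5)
Your proposal is correct in substance, but it follows a genuinely different route from the paper. The paper argues locally over $Y$ (shrinking $Y$ to an open ball): smoothness of $f$ and the chain rule $df_{x}=d\sigma_{z}\circ dg_{x}$ force $d\sigma_{z}$ to be surjective at smooth points of $Z$, so $\sigma$ is \'etale over $Z_{\reg}$; then normality of $Z$ and finiteness of $\sigma$ give $\codim \sigma(Z_{\sing})\geq 2$, simple connectedness of the ball minus a codimension-two set makes each component of $Z$ over $Y\setminus\sigma(Z_{\sing})$ map isomorphically, and the reflexivity of $\sigma_{*}\mathcal{O}_{Z}=f_{*}\mathcal{O}_{X}$ (via flatness of $f$) extends the resulting trivialization across $\sigma(Z_{\sing})$, yielding smoothness of $Z$, \'etaleness of $\sigma$, and then smoothness of $g$. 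You instead go through cohomology and base change: $h^{0}(X_{y},\mathcal{O}_{X_{y}})$ equals the number of connected components of the (smooth, proper, hence reduced) fiber, Grauert gives that $f_{*}\mathcal{O}_{X}$ is locally free and compatible with base change, reducedness of the fibers of $\sigma$ upgrades finite flat to finite \'etale, and smoothness of $Z$ and of $g$ follow formally. Both arguments are legitimate; the paper's is more hands-on and avoids base-change theorems, while yours is shorter once the standard machinery is granted.

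Two caveats. First, your Grauert step needs the \emph{constancy} of $h^{0}(X_{y},\mathcal{O}_{X_{y}})$, i.e.\ local constancy of the number of connected components of the fibers; you assert this parenthetically but do not justify it. It is true for a smooth proper morphism (e.g.\ by Ehresmann's theorem, which makes $f$ a $C^{\infty}$-locally trivial fiber bundle, or by quoting the standard fact that the Stein factorization of a proper smooth morphism is finite \'etale), but as written this is the one genuine hole in the argument, and it is exactly the point the paper's reflexive-extension argument is designed to circumvent. Second, be careful with your closing remark: if you literally read ``fibration'' as having connected fibers, then $f_{*}\mathcal{O}_{X}=\mathcal{O}_{Y}$, $Z\cong Y$, and the lemma becomes vacuous; in the paper the lemma is applied to the composition $X'\xrightarrow{\nu} X\xrightarrow{\phi} Y$, whose fibers are in general disconnected, so the hypothesis should be read as ``smooth proper surjective morphism.'' Fortunately your middle argument never uses connectedness of the fibers of $f$, so it covers the case actually needed; only your first paragraph (single-point fibers of $\sigma$) and the final remark rely on the restrictive reading and should be dropped.
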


\begin{proof}
The problem is local in $Y$, and thus, we may assume that $Y$ is an open ball. Let $z$ be a smooth point of $Z$, and take a point $x \in X$ such that $z = g(x)$. We consider the differential maps $df$, $dg$, and $d\sigma$ 
defined on the tangent bundles around $x$ and $z = g(x)$. 
Since $f \colon X \to Y$ is a smooth morphism, the differential map $df_{x} = d\sigma_{z} \circ dg_{x}$ is surjective. In particular, the differential map $d\sigma_{z}$ is surjective, which shows that $\sigma \colon Z \to Y$ is a finite \'etale cover 
on the smooth locus $Z_{\reg}$. This implies that the pushforward $\sigma_{*}\mathcal{O}_{Z}$ is a trivial vector bundle on $Y \setminus \sigma(Z_{\sing})$. Indeed, we consider the decomposition of the complement 
$Y \setminus \sigma(Z_{\sing})$ 
into connected components $V_{i}$:
$$
Y \setminus \sigma(Z_{\sing}) = \amalg_{i=1}^{d} V_{i}.
$$
We can observe that $\codim \sigma(Z_{\sing}) \geq 2$ holds since $Z$ is normal and $\sigma \colon Z \to Y$ is finite, and that $Y \setminus \sigma(Z_{\sing})$ is simply connected since $Y$ is smooth and simply connected. 
The morphism $\sigma|_{V_{i}} \colon V_{i} \to Y \setminus \sigma(Z_{\sing})$ is an \'etale cover with the connected $V_{i}$, and thus it must be an isomorphism. This shows that
$$
\sigma_{*}\mathcal{O}_{Z} |_{Y \setminus \sigma(Z_{\sing})} = \oplus_{i=1}^{d} \mathcal{O}_{V_{i}} 
\cong \oplus_{i=1}^{d} \mathcal{O}_{Y \setminus \sigma(Z_{\sing})}.
$$

On the other hand, by the flatness of $f \colon X \to Y$, we can see that 
$$
\sigma_{*}\mathcal{O}_{Z} = f_{*}g^{*}\mathcal{O}_{Z} = f_{*}\mathcal{O}_{X}
$$
is a reflexive sheaf. By the reflexivity of $\sigma_{*}\mathcal{O}_{Z}$ and the fact that $\codim \sigma(Z_{\sing}) \geq 2$, we can see that $\sigma_{*}\mathcal{O}_{Z}$ is the trivial locally free sheaf on $Y$. This implies that $Z$ is smooth and $\sigma \colon Z \to Y$ is a finite \'etale cover. We can also see that $g$ is smooth by $df_{x} = d\sigma_{z} \circ dg_{x}$.
\end{proof}

We now proceed to prove the main result of this paper.

\begin{proof}[Proof of Theorem \ref{thm-main}]
We first check that it is sufficient to prove both Conclusions (a) and (b) 
after replacing $X$ with its finite \'etale cover $\nu \colon X' \to X$. 
Indeed, this is obvious for Conclusion (a). 
Following the argument in \cite{CH19}, we check this for Conclusion (b). 
Suppose that there exists a finite \'etale cover $\nu \colon X' \to X$ 
such that $X'$ admits a locally constant fibration $\phi' \colon X' \to Y'$ with the properties listed in Theorem \ref{thm-main}. 
Let $\mathcal{F} \subset T_{X}$ be the (unique) saturated integrable subsheaf  
corresponding to an MRC fibration $X \dashrightarrow Y$ of $X$, 
so that a very general $\mathcal{F}$-leaf is a fiber of the MRC fibration $X \dashrightarrow Y$. 
The pull-back $\nu^{*}\mathcal{F} \subset T_{X'}$ corresponds to the locally constant MRC fibration $X' \to Y'$ of $X'$. 
This implies that $\nu^{*}\mathcal{F}$ is a locally free sheaf, and thus so is $\mathcal{F}$. 
Hence, by \cite[Corollary 2.11]{Hor07}, there exists a smooth RC fibration $\phi \colon X \to Y$ such that $T_{X/Y} = \mathcal{F}$.
Once we know that $K_{Y}$ is pseudo-effective, 
we can conclude that $X \to Y$ is the desired fibration by \cite[Theorem 1.6]{Mat22}.

To check that $K_{Y}$ is pseudo-effective, we consider the Stein factorization $X' \to Z \to Y$ of the composition $X' \xrightarrow{\nu} X \xrightarrow{\phi} Y$ (see the diagram below). 
Note that $Z$ is smooth and $Z \to Y$ is a finite \'etale cover by Lemma \ref{lem-fact}.
$$
\xymatrix{ & X  \ar[d]^{\phi} & X' \ar[d]^{\phi'}
\ar[l]_{\nu} \ar[d] \ar@/_4pc/[lld]_{g} \\ Z \ar[r]_{\sigma}& Y   & Y'.}
$$
We now check that $g \colon X' \to Z$ is an RC fibration (i.e.,\,a general fiber is rationally connected).
Let $z \in Z$ be a general point. 
The fiber $X_{y} := \phi^{-1}(y)$ at $y := \sigma(z)$ is rationally connected, since $y = \sigma(z)$ is also a general point in $Y$. 
By noting that $\sigma \colon Z \to Y$ is \'etale, 
we consider the decomposition into connected components: 
$$
\amalg_{w \in \sigma^{-1}(y)} X'_{w} = g^{-1}(\sigma^{-1}(y)) = \nu^{-1}(\phi^{-1}(y)) = \nu^{-1}(X_{y}), 
$$
where $X'_{w}:=g^{-1}(w)$ is the fiber of $g \colon X' \to Z$ at $w$. 
Since the fiber $X_{y} = \phi^{-1}(y)$ is rationally connected, so is the fiber $X'_{w} = g^{-1}(w)$. 
By noting that $z \in \sigma^{-1}(y)$, 
we can conclude that $X'_{z}$ is rationally connected 
(i.e.,\,$g \colon X' \to Z$ is an RC fibration). 

By the universality of MRC fibrations (see \cite{Cam92, KoMM92} for details), 
there exists a dominant rational map $Z \dashrightarrow Y'$. 
Since $Y'$ is a finite \'etale quotient of a torus (in particular, it has no rational curves), 
this map $Z \dashrightarrow Y'$ is actually an everywhere-defined fibration, 
and thus, it is a bimeromorphic morphism by $\dim Y' = \dim Z$. 
By comparing the canonical bundles, 
we can see that $K_{Z}$ is pseudo-effective 
since $Y'$ is smooth and $K_{Y'}$ is a trivial line bundle. 
This shows that $K_{Y}$ is also a pseudo-effective line bundle since $Z \to Y$ is an \'etale cover.
Thus, the fiber $X \to Y$ is the desired fibration by \cite[Theorem 1.6]{Mat22}.

\smallskip 

We will now prove Conclusion (b) after replacing $X$ with an appropriate finite \'etale cover. 
Let $\alpha \colon X \to Y := \Alb(X)$ be the Albanese map. 
By replacing $X$ with its finite \'etale cover, 
we may assume that 
$$
\hat{q}(X) = q(X) = \dim Y
$$
by Theorem \ref{thm-key}. 
Furthermore, as an additional reduction, 
we may assume that the fiber $F$ of the Albanese map $\alpha \colon X \to Y = \Alb(X)$ satisfies
$$
\hat{q}(F) = q(F)
$$
by replacing $X$ with its finite \'etale cover again. 
We will explain why this reduction is possible.
Note that all the fibers are isomorphic to each other since $\alpha \colon X \to Y = \Alb(X)$ is a locally constant fibration. 
The fiber $F$ also admits a K\"ahler metric with semi-positive holomorphic sectional curvature by Corollary \ref{cor-special}, and thus there exists a finite \'etale cover $F' \to F$ such that $q(F') = \hat{q}(F)$ by Theorem \ref{thm-key}. 
It is sufficient for this reduction to find a finite \'etale cover $F'' \to F'$ (whose irregularity still attains the augmented irregularity of $F$) 
such that it is induced by some finite \'etale cover $X' \to X$ of $X$.

By the definition of locally constant fibrations (see \cite[Definition 2.3]{MW}), 
the fiber product of $\alpha \colon X \to Y$ and the universal cover $\Unv{Y} \to Y$ is isomorphic to the product $\Unv{Y} \times F$. 
Furthermore, there exists a representation $\tau \colon \pi_{1}(Y) \to \Aut(F)$ such that 
\begin{align}\label{eq-isom}
X \cong (\Unv{Y} \times F) / \pi_{1}(Y),
\end{align}
where $\pi_{1}(Y)$ diagonally acts on the product $\Unv{Y} \times F$ via $\tau \colon \pi_{1}(Y) \to \Aut(F)$. 
We consider the induced homomorphism 
$$
\bar \tau \colon \pi_{1}(Y) \xrightarrow{\ \tau \ } \Aut(F) \xrightarrow{\quad} \Aut(\pi_{1}(F)) 
$$
and the core $H $ of $\pi_1(F')$ defined by 
$$
H := \bigcap \{ \bar \tau(\gamma) \cdot \pi_{1}(F') \mid \gamma \in \pi_{1}(Y) \} \subset \pi_1(F').
$$
By construction, the core $H$ is invariant with respect to the action of $\bar \tau$. 
Furthermore, the core $H$ is a subgroup of $\pi_{1}(F)$ of finite index. 
Indeed, let $N$ be the index of $\pi_{1}(F') \subset \pi_{1}(F)$, which is finite. 
Then, for each $\gamma \in \pi_{1}(Y)$, we can easily see that 
$$\big( \bar \tau(\gamma) \cdot \pi_{1}(F') \big) \cap \pi_{1}(F')
$$ is a subgroup of $\pi_{1}(F')$ of index at most $N$. 
Furthermore, the number of subgroups of a fixed finite index is finite, 
and thus $H$ is actually the intersection of a finite number of $\bar \tau(\gamma) \cdot \pi_{1}(F')$. 
This implies that $H$ is a subgroup of $\pi_{1}(F)$ of finite index.

Consider the finite \'etale cover $F'' \to F$ corresponding to the $\bar \tau$-invariant subgroup $H \subset \pi_{1}(F)$ of finite index. 
By construction, since $F'' \to F$ factors through $F'' \to F' \to F$, 
we have 
$$
q(F'') \geq q(F') = \hat{q}(F), 
$$ 
and thus $q(F'' )$ attains the augmented irregularity $\hat{q}(F)$ of $F$.
Furthermore, since $$\pi_{1}(F'') = H \subset \pi_{1}(F)$$ is $\bar \tau$-invariant, 
we obtain the representation $\tau'' \colon \pi_{1}(Y) \to \Aut(F'')$ 
by noting that $\Aut(F)$ and $\Aut(F'')$ can be regarded as the subgroups of $\Aut(\Unv{F})$ that fix $\pi_{1}(Y)$ and $H$, respectively. 
As in \eqref{eq-isom}, we define $X''$ by the quotient of $\Unv{Y} \times F''$ by the diagonal action induced by the representation $\tau''$. 
Then, by construction, the manifold $X''$ admits 
the induced locally constant fibration with fiber $F''$: 
\begin{align*}
X'' := (\Unv{Y} \times F'') / \pi_{1}(Y) \to \Unv{Y}/\pi_{1}(Y) \cong Y. 
\end{align*}

\smallskip

After replacing $X$ with the finite \'etale cover constructed above, 
we consider the Albanese map $\alpha \colon X \to Y = \Alb(X)$ with the fiber $F$. 
We will prove that the irregularity $q(F)$ (and thus, the augmented irregularity $\hat{q}(F)$) is zero, assuming that 
$$\text{
$\hat{q}(F) = q(F)$ \quad and \quad  $\hat{q}(X) = q(X) = \dim Y$. 
}
$$
To this end, we consider the relative Albanese map $\beta \colon X \to Z := \Alb(X/Y)$ 
associated with the Albanese map $\alpha \colon X \to Y := \Alb(X)$: 
\begin{equation*}
\xymatrix{
X \ar[rr]^{\beta} \ar[rd]_{\alpha}&&  Z = \Alb(X/Y)\ar[ld]^{\gamma} \\
&Y = \Alb(X).   &
}
\end{equation*}
Since $\alpha \colon X \to Z$ is locally trivial, so is $\gamma \colon Z \to Y$. 
For any point $y \in Y$, 
the fiber $G_{y} := \gamma^{-1}(y)$ is the Albanese torus $\Alb(F_{y})$, 
where $F_{y} := \alpha^{-1}(y)$ is the fiber of $\alpha \colon X \to Y$ at $y$. 
In particular, the canonical bundle $K_{G_{y}}$ is a trivial line bundle. 
Furthermore, since $Y$ is a finite \'etale quotient of a torus, the canonical bundle $K_{Y}$ is a Hermitian flat line bundle. 
Thus, the direct image sheaf
$$
\gamma_{*}(\mathcal{O}_{Z}(K_{Z/Y} + \gamma^{*}K_{Y})) = \gamma_{*}(\mathcal{O}_{Z}(K_{Z}))
$$
is a line bundle admitting a smooth Hermitian metric with semi-positive curvature 
by the positivity of direct images \cite[Theorem 2.6]{Wan21} 
(see also \cite[Theorem 1]{PT18} and \cite{BP08, HPS18}). 
Moreover, since $K_{G_{y}}$ is a trivial line bundle, 
the evaluation morphism 
$$
\gamma^{*}\gamma_{*}(\mathcal{O}_{Z}(K_{Z})) \to \mathcal{O}_{Z}(K_{Z})
$$
is surjective, which shows that $\mathcal{O}_{Z}(K_{Z})$ admits a smooth Hermitian metric with semi-positive curvature 
(in particular, it is pseudo-effective). 
Thus, by \cite[Theorem 1.6]{Mat22}, 
there exists a finite \'etale cover $T \to Z$ from a torus. 
Consider the fiber product $X' := X \times_Z T$ with the following diagram:
\[
\xymatrix{
X' := X \times_Z T \ar[r] \ar[d] & X  \ar[d]\\
T \ar[r] & Z.  \\
}
\]
Note that $X' \to X$ is a finite \'etale cover by construction. 
If $\dim Z > \dim Y$, then we obtain 
$$
\hat{q}(X) = q(X') \geq \dim Z > \dim Y = q(X) = \hat{q}(X),
$$
which is a contradiction. 
Hence, we have $\dim Y = \dim Z$, and thus $\gamma \colon Z \to Y$ is bimeromorphic. The bimeromorphic morphism $\gamma \colon Z \to Y$ is an isomorphism since both $K_{Y}$ and $K_{Z}$ are Hermitian flat line bundles.

$Z$ is actually isomorphic to $Y$. 
Consequently, we can obtain 
$$
\hat{q}(F) = q(F) = 0
$$
from the definition of relative Albanese maps and the choice of $F$. 

Theorem \ref{thm-key} shows that $F$ is a rationally connected projective manifold by $\hat{q}(F) = 0$, 
which completes the proof of Conclusion (b). 
Since the rationally connected fiber $F$ is simply connected, 
we can see that 
$$\pi_{1}(X) \cong \pi_{1}(Y) \cong \mathbb{Z}^{\oplus 2 \dim Y}$$
by the homotopy exact sequence of the fibration $X \to Y$. 
Thus, we can see that $\pi_{1}(X)$ is an abelian group, finishing the proof of Conclusion (a).
\end{proof}

At the end of this paper, we  prove Corollary \ref{cor-Yau} 
by applying Theorem \ref{thm-main} and Corollary \ref{cor-special}. 

\begin{proof}[Proof of Corollary \ref{cor-Yau}]
By Theorem \ref{thm-main}, there exists an MRC fibration $X \to Y$ such that $Y$ is a finite \'etale quotient of a torus. Then, by the proof of Corollary \ref{cor-special}, the tangent bundle $T_X$ splits as in \eqref{eq-split}, and any tangent vectors in $\phi^{*}T_Y \subset T_X$ are truly flat. This proves the desired inequality.
\end{proof}

%%%%%%%%%%%%%%%%%%%%%%

\end{document}